\newcommand{\g}{\mathfrak{g}}
\newcommand{\pushright}[1]{\ifmeasuring@#1\else\omit\hfill\(\displaystyle#1\)\fi\ignorespaces}
\newcommand{\pushleft}[1]{\ifmeasuring@#1\else\omit\(\displaystyle#1\)\hfill\fi\ignorespaces}
\renewcommand{\norm}[1]{\|#1\|}
\newcommand{\normI}[1]{\left\|#1\right\|_{\scriptscriptstyle 1}}
\renewcommand{\emptyset}{\varnothing}
\newcommand{\setof}[2]{\{#1\,:\,#2\}}
\newcommand{\bsetof}[2]{\bigl\{#1\,:\,#2\bigr\}}
\newcommand{\Bsetof}[2]{\Bigl\{#1\,:\,#2\Bigr\}}
\newcommand{\Z}{\mathbb{Z}}
\newcommand{\R}{\mathbb{R}}
\newcommand{\Rd}{\mathbb{R}^d}
\newcommand{\Zd}{\mathbb{Z}^d}
\newcommand{\p}{\mathbb{P}}
\newcommand{\sfe}{\mathsf{e}}
\newcommand{\sfo}{\mathsf{o}}
\newcommand{\sfO}{\mathsf{O}}
\newcommand{\bbE}{\mathbb{E}}
\newcommand{\bbG}{\mathbb{G}}
\newcommand{\bbJ}{\mathbb{J}}
\newcommand{\bbP}{\mathbb{P}}
\newcommand{\bbR}{\mathbb{R}}
\newcommand{\bbS}{\mathbb{S}}
\newcommand{\bbZ}{\mathbb{Z}}
\newcommand{\calC}{\mathcal{C}}
\newcommand{\calF}{\mathcal{F}}
\newcommand{\calW}{\mathcal{W}}
\newcommand{\Cov}{\mathrm{Cov}}
\renewcommand{\nleftrightarrow}{\mathrel{\ooalign{\(\leftrightarrow\)\cr\hidewidth\(/\)\hidewidth}}}
\newcommand{\Wulff}{\mathscr{W}}
\newcommand{\betac}{\beta_{\mathrm c}}
\newcommand{\betasat}{\beta_{\rm sat}}
\newcommand{\f}{{\scriptscriptstyle\rm f}}
\newcommand{\betahat}{\hat{\beta}_{\rm sat}}
\newcommand{\n}{\mathbf{n}}
\newcommand{\trn}{\langle \sigma_{0};\sigma_{x}\rangle}
\theoremstyle{plain}
\newtheorem{theorem}{Theorem}[section]
\newtheorem{lemma}[theorem]{Lemma}
\newtheorem{corollary}[theorem]{Corollary}
\newtheorem{conjecture}[theorem]{Conjecture}
\newtheorem{remark}{Remark}[section]
\newtheorem{obs}{Observation}
\newtheorem{openproblem}[theorem]{Open problem}
\author{Yacine Aoun}
\address{Section de Mathématiques, Université de Genève, Rue du Conseil-Général 7-9, 1205 Genève, Switzerland}
\email{Yacine.Aoun@unige.ch}
\author{Kamil Khettabi}
\address{Section de Mathématiques, Université de Genève, Rue du Conseil-Général 7-9, 1205 Genève, Switzerland}
\email{kamil.khettabi@gmail.com}
\date{\today}
\title[Two-point function of the Ising model with infinite-range interactions]{On the two-point function of the Ising model with infinite range-interactions}
\begin{document}
\begin{abstract}
In this article, we prove some results concerning the truncated two-point function of the infinite-range Ising model above and below the critical temperature. More precisely, if the coupling constants are of the form $J_{x}=\psi(x)\sfe^{-\rho(x)}$ with $\rho$ some norm and $\psi$ an subexponential correction, we show under appropriate assumptions that given $s\in\bbS^{d-1}$, the Laplace transform of the two-point function in the direction $s$ is infinite for $\beta=\betasat(s)$ (where $\betasat(s)$ is a the biggest value such that the inverse correlation length $\nu_{\beta}(s)$ associated to the truncated two-point function is equal to $\rho(s)$ on $[0,\betasat(s)))$. Moreover, we prove that the two-point function satisfies  Ornstein-Zernike asymptotics for $\beta=\betasat(s)$ on $\bbZ$. As far as we know, this constitutes the first result on the behaviour of the two-point function at $\betasat(s)$. Finally, we show that there exists $\beta_{0}$ such that for every $\beta>\beta_{0}$, $\nu_{\beta}(s)=\rho(s)$. 
All the results are new and their proofs are built on different results and ideas developed in~\cite{Duminil-Copin+Tassion-2016, Aoun+Ioffe+Ott+Velenik-CMP-2021}.
\end{abstract}

\maketitle	

\section{Introduction}

In the present paper, we study the behaviour of the two-point function in Ising models with infinite-range interactions. In~\cite{Aoun+Ioffe+Ott+Velenik-CMP-2021} (see also~\cite{Aoun+Ioffe+Ott+Velenik-PRE-2021}), the first author and collaborators considered a general class of lattice spin systems (including the Ising model) on $\mathbb{Z}^{d}$ with interactions of the form $J_{x}=\psi(x)\sfe^{-\rho(x)}$, where $\psi(x)$ is a subexponential correction and $\rho$ is a norm on $\mathbb{R}^{d}$. Let $\langle\sigma_{0}\sigma_{x}\rangle_{\beta}$ be the usual Ising two-point function with free boundary conditions at inverse temperature $\beta$ without an external field, and $\nu_{\beta}(\hat{x})$ be the associated inverse correlation length in the direction $\hat{x}=x/\norm{x}$ where $\norm{\cdot}$ is the euclidian norm. It is easy to see that one always has $\nu_{\beta}(\hat{x})\leq\rho(x)$. In~\cite{Aoun+Ioffe+Ott+Velenik-CMP-2021}, we developed an explicit necessary and sufficient condition (see Theorem~\ref{thm:explicit_saturation}) to ensure the existence of a non-trivial \textit{saturation transition}, i.e. the strict positivity of $\betasat(\hat{x})=\sup\lbrace \beta\geq 0: \nu_{\beta}(\hat{x})=\rho(\hat{x})\rbrace$. For instance, a sufficient condition for the latter to happen is to have $\psi(x)=\sfO(\norm{x}^{-(d+\varepsilon)})$ for some $\varepsilon>0$. By definition one always has $\betasat(\hat{x})\leq\betac$ where $\betac$ is the usual transition point of the Ising model. Note that if $\betasat(\hat{x})>0$,  the function $\beta\mapsto\nu_{\beta}(\hat{x})$ is non-analytic. Moreover, we proved in~\cite{Aoun+Ioffe+Ott+Velenik-CMP-2021} that if $\betasat(\hat{x})>0$,  then the Ornstein-Zernike asymptotics (see~\eqref{eq:OZ_behaviour}) \textit{do not hold} at arbitrarily high temperature. In subsequent works~\cite{Aoun+Ott+Velenik-2021,Aoun+Ott+Velenik-2022}, we studied the behavior of the two point function in the saturated regime $(0,\betasat(\hat{x}))$ and in the non-saturated regime $(\betasat(\hat{x}),\betac)$. Under appropriate assumptions, for $\beta\in (\betasat(\hat{x}),\betac)$, we proved in~\cite{Aoun+Ott+Velenik-2021} that the two-point function has the \textit{Ornstein-Zernike} asymptotics: there exists $c:=c(\hat{x}, \beta)>0$ such that 
\begin{equation}\label{eq:OZ_behaviour}
\langle\sigma_{0}\sigma_{x}\rangle_{\beta}=c\norm{x}^{-\frac{d-1}{2}}\sfe^{-\nu_{\beta}(x)}(1+\sfo_{\norm{x}}(1)).
\end{equation}
The OZ asymptotics were predicted in the physics literature in~\cite{Ornstein+Zernike-1914}, and were expected to hold generally when the interactions decay exponentially fast in the distance. In~\cite{Aoun+Ott+Velenik-2022}, we proved that this is not the case in the whole saturated regime: under approriate assumptions, for $\beta\in (0,\betasat(\hat{x}))$, there exists $C(\beta,\hat{x})>0$ such that
\begin{equation}\label{eq:1jump_behaviour}
\langle\sigma_{0}\sigma_{x}\rangle_{\beta}=CJ_{x}(1+\sfo_{\norm{x}}(1)).
\end{equation}
This leaves us with a natural question of determining the asymptotics of the two-point function at $\betasat(\hat{x})$. The techniques used for proving~\eqref{eq:1jump_behaviour} and~\eqref{eq:OZ_behaviour} break down at $\betasat(\hat{x})$. On the one hand, in~\cite{Aoun+Ott+Velenik-2021}, we derived~\eqref{eq:OZ_behaviour} under the mass-gap assumption $\nu_{\beta}(\hat{x})<\rho(\hat{x})$, which is violated at $\betasat(\hat{x})$ since by continuity of the function $\beta\mapsto\nu_{\beta}(\hat{x})$, one has $\nu_{\beta}(\hat{x})=\rho(\hat{x})$. On the other hand, we used differential inequalities (inspired by the ideas of~\cite{Duminil-Copin+Raoufi+Tassion-2017, Hutchcroft-2020}) and the fact that for any $\beta_{0}\in (0,\betasat(\hat{x}))$ there exists an open interval containing $\beta_{0}$ on which the function $\beta\mapsto\nu_{\beta}(\hat{x})$ is constant to derive~\eqref{eq:1jump_behaviour}. In the present article, we provide partial answers for the behavior of the two-point function at $\betasat(\hat{x})$: under suitable assumptions, we prove that the Laplace transform associated to the two-point function is infinite. Moreover, we prove that~\eqref{eq:OZ_behaviour} holds up to multiplicative constants on $\bbZ$. This is the first example where the OZ asymptotics are shown to hold in the absence of a mass-gap. In particular, it shows that the mass-gap is not a necessary condition for OZ asymptotics to hold.

Note that in the discussion above, the saturation phenomenon is only shown to happen at high temperatures. In the present work, we prove the existence of a non-trivial saturation regime at arbitrarily \textit{low temperatures} as well. Let $\langle \sigma_{0};\sigma_{x}\rangle_{\beta}$ be the truncated two-point function of the Ising model with $+$ boundary conditions and $\nu_{\beta}(\hat{x})$ the associated inverse correlation length. We prove the existence of $\betasat^{*}:=\betasat^{*}(\hat{x})<\infty$ such that for every $\beta>\betasat^{*}$, we have $\nu_{\beta}(\hat{x})=\rho(\hat{x})$.
\section{Models and notations}

\subsection{Graphs}

Most of our results naturally extend to a wider set-up but we restrict attention to \(\Zd\). We will always see \(\Zd\) as canonically embedded inside \(\bbR^d\) and will denote \(\norm{\cdot}\) the Euclidean norm on \(\Rd\). \(\rho\) will denote a norm on \(\bbR^d\) (and will be one of the parameters in our analysis).

We consider the graph \((\Zd,E_d)\) with edge set \(E_d = \bigl\{\{i,j\}\subset\Zd\bigr\}\), which we will often write simply \(\Zd\). Let \(\Lambda_N=\{-N,\dots,N\}^d\) and \(\Lambda_N(x) = x+\Lambda_N\).

Given a subgraph $\Lambda$, let $\Lambda^{c}=\bbZ^{d}\backslash\Lambda$ and 
\begin{equation*}
	E_\Lambda = \bsetof{\{i,j\}\in E_{d}}{\{i,j\}\subset \Lambda}.
\end{equation*}

Given $x,y,z\in\bbZ^{d}$, a sequence $\gamma=(\gamma_{0},\gamma_{1},\dots,\gamma_{n})\in (\bbZ^{d})^{n+1}$ is called a path from $x$ to $y$ if $\gamma_{0}=x$ and $\gamma_{n}=y$. We say that $n$ is the length of the path, and denote it by $\abs{\gamma}$. We say that $\gamma$ is edge self-avoiding if $\lbrace \gamma_{i},\gamma_{i+1}\rbrace=\lbrace \gamma_{j},\gamma_{j+1}\rbrace\Rightarrow i=j$. 
\subsection{Interaction}
We consider a weight function (the \textit{interaction}, or the set of \textit{coupling constants}) \(J:E_d\to\R_+\) of the form  $J_{i,j}=\psi(i-j)\sfe^{-\rho(i-j)}$ where $\psi$ satisfies
	\begin{equation*}
	\lim\limits_{\norm{x}\rightarrow\infty}\dfrac{\log(\psi(x))}{\norm{x}}=0.
	\end{equation*}
	Moreover, we will assume that the interaction satifies the following properties:
\begin{itemize}
	\item \textit{No self-interaction:} \(J_0=0\),
	\item \textit{Rotational invariance:} $J$ is invariant by a rotation of $\pi/2$ around any coordinate axis.
	\end{itemize}
\subsection{Percolation configurations}
Given a subset $\Lambda$ of $\bbZ^{d}$, the percolation configuration $\omega$ is defined as a function from $E_{\Lambda}$ to $\lbrace 0,1\rbrace$. Given an edge $\lbrace i,j\rbrace\in E_{\Lambda}$, we say that the edge $\lbrace i,j\rbrace$ is open in $\omega$ if $\omega_{i,j}=1$ and closed otherwise. Given the subsets $A,B,C$ of $\Lambda$, we will denote by $\lbrace A\overset{C}{\leftrightarrow}B\rbrace$ the subset of percolation configurations $\omega$ such that there exists a path from $A$ to $B$ consisting of open edges of $C$.  If $C=\Lambda$, we will remove it from the notation. We will write $\lbrace x\overset{C}{\leftrightarrow}y\rbrace$ instead of $\lbrace \lbrace x\rbrace\overset{C}{\leftrightarrow}\lbrace y\rbrace\rbrace$. Finally, we will define the connected component of $x$ by $\calC_{x}:=\lbrace y\in\bbZ^{d}: x\leftrightarrow y\rbrace$.

\subsection{Constants}
\(c, C, c', C', \dots\) will denote constants whose value can change from line to line. Unless explicitly stated otherwise, they depend only on \(d, \beta, h,  J\).

\subsection{Ising Model}
The Ising model at inverse temperature \(\beta\geq 0\) without a magnetic field with free boundary condition on \(\bbZ^d\) is the probability measure on \(\Omega :=\{-1,+1\}^{\bbZ^d}\) given by the weak limit of the finite-volume measures (for \(\sigma\in\{-1,+1\}^{\Lambda_N}\) and \(\Lambda_N=[-N,N]^{d}\cap\mathbb{Z}^{d}\))
\[
	\mu^{\f}_{\Lambda_N;\beta}(\sigma) = \frac{1}{Z^{\f}_{\Lambda_N;\beta}} \sfe^{-\beta\mathscr{H}^{\f}_{N}(\sigma)},
\]
with Hamiltonian
\[
	\mathscr{H}^{\f}_{N}(\sigma) = -\sum_{\{i,j\}\subset\Lambda_N } J_{ij} \sigma_i\sigma_j - \sum_{i\in\Lambda_N}\sigma_i
\]
and partition function \(Z^{\f}_{\Lambda_N;\beta,}\). We also define the Ising measure at inverse temperature $\beta\geq 0$ with $+$ boundary condition and without a magnetic field by 
$$\mu^{+}_{\Lambda_N;\beta}(\sigma) = \frac{1}{Z^{+}_{\Lambda_N;\beta}} \sfe^{-\beta\mathscr{H}^{+}_{N}(\sigma)},$$ with Hamiltonian

$$\mathscr{H}_{N,h}^{+}(\sigma) =  -\sum_{\{i,j\}\subset\Lambda_N } J_{ij} \sigma_i\sigma_j -\sum_{\substack{i \in \Lambda_N \\ j\in \Omega\setminus \Lambda_N}}J_{ij}\sigma_i.$$
For $\eta\in \lbrace +, f \rbrace$, the limit \(\mu^{\eta}_{\beta}=\lim_{N\to\infty}\mu^{\eta}_{\Lambda_N;\beta}\) is always well defined and agrees with the unique infinite-volume measure whenever \(\beta<\betac\), the critical point of the model; we refer to~\cite{Friedli+Velenik-2017} for more details. We will be interested in the behaviour of the \textit{truncated two-point} function of the model 
\begin{equation*}
\langle\sigma_{0};\sigma_{x}\rangle_{\beta}:=\Cov(\sigma_{0},\sigma_{x}),
\end{equation*}
where the covariance is taken with respect to $\mu^{+}_{\beta}$. We also introduce the \textit{correlation length} associated to the latter in the direction $s\in\mathbb{S}^{d-1}$
\begin{equation*}
\nu_{\beta}(s):= -\lim_{n\to\infty} \frac{1}{n} \log \langle\sigma_{0};\sigma_{ns}\rangle_{\beta}.
\end{equation*}
The existence of this limit follows from the subadditivity proved in~\cite{Graham-1982}. The subaddivity also provides the following bound
\begin{equation}\label{ineq:subaddivitiy}
\langle\sigma_{0};\sigma_{ns}\rangle_{\beta}
\leq\sfe^{-n\nu_{\beta}(s)}.
\end{equation}
Let us also introduce
\begin{equation*}
\langle\sigma_{0}\sigma_{x}\rangle_{\beta}
:=
\bbE[\sigma_{0}\sigma_{x}].
\end{equation*} 
When $\beta<\betac$, the truncated two-point function is just equal to the usual two-point function,
\begin{equation*}
\langle\sigma_{0};\sigma_{x}\rangle_{\beta}=\langle\sigma_{0}\sigma_{x}\rangle_{\beta}.
\end{equation*}
The following result was proved in~\cite{Aoun+Ott+Velenik-2022}.
\begin{theorem}\label{thm:uniform_expo_decay}
Fix $\beta<\betac$ and $s\in\bbS^{d-1}$. Then $\nu_{\beta}(s)>0$.  
\end{theorem}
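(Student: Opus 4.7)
The plan is to reduce, via the random-cluster representation, to the sharpness of the phase transition for FK\(_2\) percolation, which is then handled by a Duminil-Copin-Tassion-style argument already invoked in the paper.

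Since $\beta<\betac$ the infinite-volume Ising Gibbs state is unique, so by spin-flip symmetry $\langle\sigma_0\rangle_\beta^+=0$, hence $\langle\sigma_0;\sigma_x\rangle_\beta=\langle\sigma_0\sigma_x\rangle_\beta$. By the Edwards-Sokal coupling this equals the connection probability $\phi^\f_{\beta,2}(0\leftrightarrow x)$ under the free FK\(_2\) measure with coupling constants $1-\sfe^{-2\beta J_{ij}}$. It therefore suffices to prove exponential decay of $\phi^\f_{\beta,2}(0\leftrightarrow x)$ in the Euclidean norm: there exist $c(\beta)>0$ and $C(\beta)<\infty$ such that
\begin{equation*}
\phi^\f_{\beta,2}(0\leftrightarrow x)\leq C\sfe^{-c\norm{x}}\quad\text{for all } x\in\bbZ^d.
\end{equation*}
Given this, taking $x=ns$ and passing to the subadditive limit yields $\nu_\beta(s)\geq c(\beta)>0$ for every $s\in\bbS^{d-1}$.

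To establish the displayed bound I would invoke the Duminil-Copin-Tassion sharpness machinery. In our setting $J_x=\psi(x)\sfe^{-\rho(x)}$ with $\psi$ subexponential and $\rho$ a norm, so $\sum_x J_x<\infty$ and both the OSSS inequality and the differential inequality of \cite{Duminil-Copin+Tassion-2016} are available for the product Bernoulli field indexed by the edges of $(\bbZ^d,E_d)$. A safe alternative is to approximate: work with the truncated interaction $J^{(R)}_x=J_x\IF{\norm{x}\leq R}$, apply the (short-range) DCT argument to obtain exponential decay with constants depending on $R$ only through $\sum_{\norm{x}>R}J_x$, and then let $R\to\infty$ using monotonicity of the FK measure in the coupling constants together with the fact that the truncated critical points converge to $\betac$ from above.

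The main obstacle is to transfer the DCT/OSSS step to the infinite-range setting in a controlled way: the pivotal/influence terms appearing in the OSSS decomposition may now involve arbitrarily long edges, and one must verify that the exponential tail of $J$ keeps boundary contributions negligible uniformly in the size of the box $\Lambda_N$. This is precisely where the specific form $J_x=\psi(x)\sfe^{-\rho(x)}$ with $\rho$ a genuine norm is used in a crucial way, guaranteeing the summability $\sum_x\norm{x}^k J_x<\infty$ needed to absorb the long-edge contributions at every scale.
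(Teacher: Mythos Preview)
The paper does not give its own proof of this statement; it records it as a result from \cite{Aoun+Ott+Velenik-2022}. Later, in the discussion following Conjecture~\ref{conjecture:expo_decay}, the authors note explicitly that the proof in \cite{Aoun+Ott+Velenik-2022} proceeds ``using the random cluster representation of the Ising model and the OSSS inequality for monotonic measures (see~\cite{Duminil-Copin+Raoufi+Tassion-2017})''. Your proposed route---reduce the truncated two-point function to an FK connection probability and then run a Duminil-Copin--Tassion/OSSS sharpness argument---is thus precisely the approach of the cited reference.

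Two small corrections. First, the relevant sharpness input in the long-range setting is the OSSS-based argument of \cite{Duminil-Copin+Raoufi+Tassion-2017}, not the original $\varphi(S)$ argument of \cite{Duminil-Copin+Tassion-2016}; you conflate the two references. Second, what is actually needed to run OSSS on the long-range graph is only the summability $\sum_x J_x<\infty$ (finite effective degree), not the higher-moment bound $\sum_x\norm{x}^k J_x<\infty$ that you invoke; the exponential form $J_x=\psi(x)\sfe^{-\rho(x)}$ plays no special role here beyond guaranteeing summability. Your truncation-and-limit alternative is correct in spirit but unnecessary once OSSS is applied directly.
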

\subsubsection{FK-Ising model}
Intimately related to the Ising model is the FK-Ising model (i.e. the Random-Cluster model with $q=2$). The latter is a measure on percolation configurations on $\bbZ^{d}$ depending on a parameter \(\beta\in\bbR_{\geq 0}\) that will be denoted by $\Phi_{\beta}$ and is obtained as the weak limit of the finite-volume measures
\begin{equation}
	\Phi_{\Lambda_N;\beta}(\omega) = \frac{1}{Z^{\text{FK}}_{\Lambda_N;\beta}} \prod_{\{i,j\}\in\omega}(\sfe^{\beta J_{ij}}-1) 2^{\kappa(\omega)},
\end{equation}
where \(\kappa(\omega)\) is the number of connected components in the graph with vertex set \(\Lambda_N\) and edge set \(\omega\) and \(Z^{\text{FK}}_{\Lambda_N;\beta}\) is the partition function. One has the following correspondance between the Ising model without a magnetic field and the FK-Ising model
\begin{equation*}
	\label{eq:Potts_FK_Corresp}
	\langle\sigma_{0}\sigma_{x}\rangle_{\beta}= \Phi_{\beta}(0\leftrightarrow x).
\end{equation*}
It is a standard consequence that one in particular has 
\begin{equation}\label{ineq:truncated>FK}
\trn_{\beta}\geq\Phi_{\beta}(0\leftrightarrow x, 0\nleftrightarrow\infty).
\end{equation}
During the proofs, we will need several well-known properties of the FK-Ising model:

\vspace{2mm}
\textit{Finite energy property:} Fix $\Lambda\subset\bbZ^{d}$ and $\beta>0$. For any $e\in E_{\Lambda}$ and $\eta\in\lbrace 0,1\rbrace^{E_{\Lambda}\backslash\lbrace e\rbrace}$, one has
\begin{equation}\label{eq:finite_energy}
\dfrac{\sfe^{\beta J_{e}}-1}{\sfe^{\beta J_{e}}+1}
\leq 
\Phi_{\Lambda;\beta}(\omega_{e}=1\hphantom{,}\vert \omega_{E_{\Lambda}\backslash\lbrace e\rbrace}=\eta)\leq 1-\sfe^{-\beta J_{e}}.
\end{equation}

\vspace{2mm}
\textit{FKG inequality:} We say that a $\calF_{\Lambda}$-measurable event $A$ is increasing if $\mathds{1}_{A}$ in increasing with respect to the lexographical order on $\lbrace 0,1\rbrace^{E_{\Lambda}}$. Given two increasing events $A,B$, the FKG inequality states that
\begin{equation*}
\Phi_{\Lambda;\beta}(A)\Phi_{\Lambda;\beta}(B)\leq\Phi_{\Lambda;\beta}(A\cap B).
\end{equation*}

\vspace{2mm}
\textit{Simon--Lieb inequality:} Given a finite subset $S$ containing $0$, one has~\cite{Duminil-Copin+Tassion-2016}
\begin{equation}\label{ineq:Simon-Lieb}
\Phi_\beta(u\xleftrightarrow{\Lambda} v)
	\leq \\
	\sum_{x\in S} \sum_{y\notin S}\Phi_\beta(u \xleftrightarrow{S} x) \beta J_{x,y} \Phi(y\xleftrightarrow{\Lambda} v).
\end{equation}
These properties in particular imply the existence of $C:=C(\beta)>0$ such that 
\begin{equation}\label{ineq:lower_bound_truncated}
CJ_{x}\leq \trn_{\Lambda,\beta}.
\end{equation}
Indeed, one has
\begin{equation*}
\Phi_{\Lambda;\beta}(\omega_{\lbrace 0,x\rbrace}=1, \abs{\calC_{0}}=2)\leq
\Phi_{\beta}(0\leftrightarrow x, 0\nleftrightarrow\infty).
\end{equation*}
The finite energy property then implies the existence of $C:=C(\beta)>0$ such that
\begin{equation*}
CJ_{x}\leq\Phi_{\Lambda;\beta}(\omega_{\lbrace 0,x\rbrace}=1, \abs{\calC_{0}}=2).
\end{equation*}
All these inequalities combined with~\eqref{ineq:truncated>FK} gives~\eqref{ineq:lower_bound_truncated}. Notice that~\eqref{ineq:lower_bound_truncated} in particular implies that $\nu_{\beta}(s)\leq\rho(s)$ for any $\beta>0$.
\subsubsection{Random current}\label{subsection:random_current}
Let $\Lambda$ be a finite subgraph of $\Z^d$.  We consider an additional vertex $\mathfrak{g}$ in the graph $\Lambda$ and denote by $\Lambda^{\mathfrak{g}}$ the graph obtained by adding an edge between each $x\in\Lambda$ and $\mathfrak{g}$. 
A \textit{current} $\mathbf{n}=(\n_{xy})_{x,y\in E_{\Lambda^{\mathfrak{g}}}}$ on $\Lambda^{\mathfrak{g}}$ is an element of $\mathbb{N}^{E_{\Lambda^{\mathfrak{g}}}}$.  For $x\in \Lambda^{\mathfrak{g}}$,  set $X(\mathbf{n},x):=\sum_{y\in \Lambda^{\mathfrak{g}}} \mathbf{n}_{xy}$.  We define $$\partial \mathbf{n} := \lbrace x\in \Lambda^{\mathfrak{g}} : X(\mathbf{n},x) \text{ is odd}\rbrace.$$
In the case of the Ising model on a finite box $\Lambda\subset \mathbb{Z}^d$ with $+$ boundary condition, we set $J_{x\mathfrak{g}} :=\sum_{y\in\Lambda^c}J_{xy}$. This will allow us to reinterpret the $+$ boundary conditions as the presence of a new vertex, namely $\mathfrak{g}$. We also define the weight of a current $\mathbf{n}$ on $\Lambda^{\g}$ to be the quantity $$w_{\Lambda^{\g};\beta}(\n) := \prod_{xy \in E_{\Lambda^{\g}}} \frac{(\beta J_{xy})^{\n_{xy}}}{\n_{xy}!}.$$
Taylor-expanding $\sfe^{\beta J_{ij} \sigma_i \sigma_j}$ and resumming, one gets
$$\langle\sigma_{A}\rangle_{\Lambda,\beta} =\left\{
    \begin{array}{ll}
        \frac{Z_{\Lambda}(A)}{Z_{\Lambda}(\emptyset)} & \mbox{if}~\vert A \vert ~\mbox{is even} \ \\
        \frac{Z_{\Lambda}(A\cup \lbrace g\rbrace)}{Z_{\Lambda}(\emptyset)} & \mbox{otherwise}
    \end{array}
\right.
$$ where $Z_{\Lambda}(F) := \sum_{\n : \partial \n = F} w_{\Lambda;\beta}(\n)$ for any subset $F\subset \Lambda$. We will refer to this correspondance as the \textit{random-current representation}. 
Given a subset $A\subset \Lambda^{\g}$, one can define a probability law on currents on $\Lambda^{\g}$ with sources $A$ by 
$$\p^{A}_{\Lambda^{\g}; \beta}(\n)=\frac{w_{\Lambda^{\g};\beta}(\n)\mathsf{1}_{\partial \n = A}}{Z_{\Lambda}(A)}.$$
We will use the notation $ \p_{\Lambda^{\g};\beta}^{\emptyset,\lbrace 0,x\rbrace}$ for the product measure $\p_{\Lambda^{\g}, \beta}^{\emptyset}\times \p_{\Lambda^{\g},\beta}^{\lbrace 0,x\rbrace}$. This is therefore a law on pairs of currents $(\n_1,\n_2)$ such that $\partial\n_1 = \emptyset$ and $\partial \n_2=\lbrace 0,x\rbrace$.  In particular,  $0$ and $x$ are connected in $\n_2$ since those are the only vertex with odd degree.  Such a pair $\n=(\n_1,\n_2)$ can be seen as the sum $\n_1 + \n_2$.  
It is well known (see for instance~\cite{Duminil-Copin-2016} that
\begin{equation}\label{eq:truncated_to_current}
\trn_{\Lambda; \beta} = \langle \sigma_0 \sigma_x \rangle_{\Lambda;\beta}\p^{\emptyset, \lbrace 0,x\rbrace}_{\Lambda^{\g};\beta}\left[0\nleftrightarrow\mathfrak{g} \right].
\end{equation}
Note that every current $\n$ can be seen as a percolation realization $(\omega_e)_{e\in E_{\Lambda}}$,  by declaring an edge $e$ is said to be open if and only if $\mathbf{n}_e > 0$. 
\subsubsection*{Partial finite energy property}
One can show that~\cite{Ott-2020}
\begin{equation}\label{ineq:finite_energy_lower_bound}
 \p^{A}_{\Lambda^{\g},\beta} \left(\n_e >0~\vert~  \n_f = m(f),~\forall f\neq e \right)\geq \frac{\cosh (\beta J_{e})-1}{\cosh(\beta J_{e})},
\end{equation} 
for any edge $e=\lbrace a,b\rbrace\in E_{\Lambda}$ and any function $m: \Lambda\setminus \lbrace e \rbrace\rightarrow \mathbb{N}$  compatible with $A$. This in particular implies that 
 \begin{equation}\label{ineq:useful_closing}
 \p^{A}_{\Lambda^{\g};\beta} \left(\n_e =0~\vert~  \n_f = m(f),~\forall f\neq e \right)\leq 2 \sfe^{-\beta J_{e}}.
 \end{equation}
Furthermore, recall that if $\beta>\beta_c$, then for any set $B$ with $\abs{B}\in\lbrace 0,2\rbrace$, there exists $C'>0$ such that $C'\leq \langle \sigma_B \rangle_{\Lambda ; \beta,h} \leq 1$ (we set $\sigma_{\emptyset}=1$). There exists $C>0$ such that for any set  $\lbrace e_1,...,e_k\rbrace$ of edges in $\Lambda$,  one has
$$\p^{B}_{\Lambda^{\g};\beta,h}\left(\n_{e_1} \geq 1,..., \n_{e_k}\geq 1\right )\leq  C\beta^k \prod_{i=1}^{k}J_{e_i}.$$
Indeed, summing on all currents $\n$ with $\partial\n=B$ satisfying $\n_{e_i} \geq 1$ for $1\leq i\leq k$,  one gets
\begin{align*}
\sum_{\n}  \frac{w(\n)}{Z_{\Lambda}(B)} &\leq \left(\beta^k \prod_{i=1}^{k}J_{e_i} \right)\sum_{\tilde{\n}} \frac{w(\tilde{\n})}{Z_{\Lambda}(B)}\leq 
 \frac{\langle\sigma_{S}\rangle_{\Lambda;\beta}}{\langle \sigma_{B}\rangle_{\Lambda;\beta}}\beta^{k}\prod_{i=1}^{k}J_{e_i}\leq C\beta^{k}\prod_{i=1}^{k}J_{e_i},
\end{align*}
where the second sum is on the currents $\tilde{\n}$ having as sources set the symmetric difference $S:=A\Delta e_1 \Delta...\Delta e_k$.
Putting these two results together, one thus gets 
\begin{equation}\label{ineq:like_finite_energy}
\p^{B}_{\Lambda^{\g};\beta} \left(\n_{e_1}\geq 1,...,\n_{e_k}\geq 1, \n_{f_1}=...=\n_{f_l}=0\right) \leq C 2^l \beta^k \prod_{i=1}^{k}J_{e_i}\prod_{j=1}^{l}\sfe^{-\beta J_{f_j}},
\end{equation}
for any family $\lbrace f_1,...,f_l\rbrace$ of edges.

\subsubsection{Convex geometry}
It will be convenient to introduce a few quantities associated to the norm $\rho$. First, two convex sets are important: the unit ball $\mathscr{U}\subset \bbR^d\) associated to $\rho$ and the corresponding
 \emph{Wulff shape}
\[
	\Wulff = \setof{t\in\bbR^d}{\forall x\in\bbR^d,\, t\cdot x \leq \rho(x)}.
\]
Given a direction \(s\in \bbS^{d-1}\), we say that the vector \(t\in\bbR^d\) is dual to \(s\) if \(t\in\partial\Wulff\) and \(t\cdot s = \rho(s)\). A direction \(s\) possesses a unique dual vector \(t\) if and only if \(\Wulff\) does not possess a facet with normal \(s\). Equivalently, there is a unique dual vector when the unit ball $\mathscr{U}$ has a unique supporting hyperplane at $s/\rho(s)$. (See Fig.~\ref{fig:duality} for an illustration.) We refer to~\cite{Ioffe-2015} for the necessary backround on the convex geometry.

\begin{figure}[ht]	\includegraphics{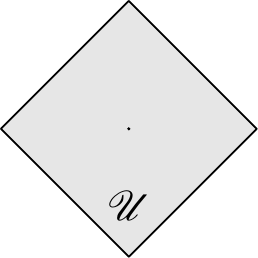}
	\hspace*{1cm}
	\includegraphics{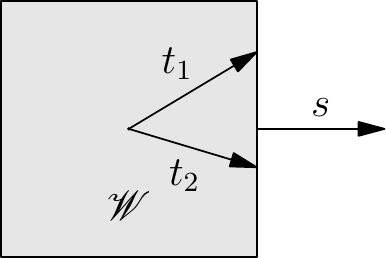}
	\hspace*{1cm}
	\includegraphics{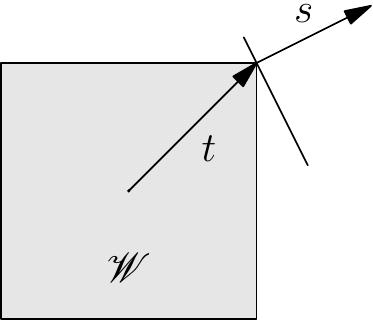}
	\caption{Left: The unit ball for the norm $\rho(\cdot)=\normI{\cdot}\). Middle: the corresponding Wulff shape $\Wulff$ with two vectors \(t_1\) and \(t_2\) dual to \(s=(1,0)\). Right: the set \(\Wulff\) with the unique vector \(t\) dual to \(s=\frac{1}{\sqrt{5}}(2,1)\).}
	\label{fig:duality}
\end{figure}
\subsubsection{Saturation transition}

Recall that~\eqref{ineq:lower_bound_truncated} implies that $\nu_{\beta}(s)\leq\rho(s)$ for every $\beta>0$.  As explained in the introduction, we consider the saturation point above the critical temperature in the direction $s\in\bbS^{d-1}$ defined by
\begin{equation*}
\betasat(s)=\sup\lbrace \beta\in [0,\betac]: \nu_{\beta}(s)=\rho(s)\rbrace.
\end{equation*}
For $t\in\Wulff$, we define
\begin{equation*}
\bbG_{\beta}(t)=\sum_{x\in\mathbb{Z}^{d}}\sfe^{t\cdot x}\Phi_{\beta}(0\leftrightarrow x)\qquad \text{ and }\qquad \bbJ(t)=\sum_{x\in\mathbb{Z}^{d}}\sfe^{t\cdot x}J_{0,x},
\end{equation*}
and an associated transition points
\[
	\betahat(t) = \sup\setof{\beta\geq 0}{\bbG_{\beta}(t) < \infty},
\]
and 
\begin{equation*}
\betahat(s)=\smashoperator{\sup_{\substack{t\in\Wulff \\ \text{t dual to s}}}}\betahat(t).
\end{equation*}
It was proved in~\cite{Aoun+Ott+Velenik-2022} that if $\psi(x)=\rho(x)^{-\alpha}$ with $\alpha>2d$ or $\psi(x)=\sfe^{-c\rho(x)^{\eta}}$ with $\eta\in(0,1)$ and $c>0$, then $\betahat(s)=\betasat(s)$. We can now state the criterion ensuring the existence of a non-trivial saturation point:
\begin{theorem}\label{thm:explicit_saturation}
Let $J$ be exponentially decaying. Fix $s\in\bbS^{d-1}$.Then $\betasat(s)>0$ if and only if there exists a dual vector $t$ to $s$ such that $\bbJ(t)<\infty$.
\end{theorem}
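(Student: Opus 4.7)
The plan is to prove the two implications separately. Both arguments rely on comparing $\Phi_\beta(0\leftrightarrow\cdot)$ to the Green function of a random walk with step weights proportional to $\beta J$: Simon--Lieb gives the upper bound (easy direction), finite-energy/random-current gives the lower bound (hard direction). The dichotomy ``$\bbJ(t)<\infty$ vs.\ $\bbJ(t)=\infty$'' at the dual vectors of $s$ then translates into a strict comparison of inverse correlation lengths.

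$(\Leftarrow)$ Suppose $t$ is dual to $s$ with $\bbJ(t)<\infty$. I would apply~\eqref{ineq:Simon-Lieb} with $S=\{0\}$, multiply by $e^{t\cdot v}$, sum over $v\neq 0$ and use translation invariance of $\Phi_\beta$ to obtain
\[
\bbG_\beta(t)\leq 1+\beta\bbJ(t)\,\bbG_\beta(t),\qquad\text{so}\qquad\bbG_\beta(t)\leq(1-\beta\bbJ(t))^{-1}<\infty
\]
for every $\beta<1/\bbJ(t)$. Since $t$ is dual to $s$, $t\cdot(ns)=n\rho(s)$, so $\Phi_\beta(0\leftrightarrow ns)\leq e^{-n\rho(s)}\bbG_\beta(t)$ and $\nu_\beta(s)\geq\rho(s)$; combined with $\nu_\beta(s)\leq\rho(s)$ this forces $\nu_\beta(s)=\rho(s)$, so $\betasat(s)\geq 1/\bbJ(t)>0$.

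$(\Rightarrow)$ I argue the contrapositive: assuming $\bbJ(t)=\infty$ for every $t$ dual to $s$, I plan to show $\nu_\beta(s)<\rho(s)$ for all sufficiently small $\beta>0$. The first step is a random-walk lower bound
\[
\Phi_\beta(0\leftrightarrow x)\;\geq\;\sum_{k\geq 1}(c\beta)^{k}J^{*k}(x)=:G_{\mathrm{RW}}(x),
\]
valid for some $c=c(\beta,J)>0$ uniform in $x$; I would establish it by restricting $Z_\Lambda(\{0,x\})$ in the random-current representation to currents of unit multiplicity supported on walks $0\to x$, applying the partial finite-energy property~\eqref{ineq:finite_energy_lower_bound}, and bounding $Z_\Lambda(\emptyset)$ via the high-temperature cluster expansion or the sharpness arguments of~\cite{Duminil-Copin+Tassion-2016}. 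The Laplace transform is then the geometric series $\sum_x e^{t\cdot x}G_{\mathrm{RW}}(x)=\sum_{k\geq 1}(c\beta\,\bbJ(t))^{k}$, convergent iff $c\beta\,\bbJ(t)<1$, and a Tauberian/saddle-point analysis identifies the exponential decay rate of $G_{\mathrm{RW}}(ns)$ in direction $s$ with $\rho_\beta(s):=\sup\bsetof{t\cdot s}{\bbJ(t)\leq 1/(c\beta)}$. Since $\psi$ is subexponential, $\bbJ$ is finite, continuous and convex on the interior of $\Wulff$ and blows up on the face of $\partial\Wulff$ dual to $s$ by hypothesis, so for $\beta$ small enough that $1/(c\beta)>\bbJ(0)$ the sub-level set $\{\bbJ\leq 1/(c\beta)\}$ avoids that face and $\rho_\beta(s)<\rho(s)$. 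Combining with $\Phi_\beta\geq G_{\mathrm{RW}}$ yields $\nu_\beta(s)\leq\rho_\beta(s)<\rho(s)$, so $\betasat(s)<\beta$ for all small $\beta$, i.e.\ $\betasat(s)=0$.

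The hardest part will be establishing the random-walk lower bound. The random-current expansion naturally produces a sum over \emph{self-avoiding} walks rather than all walks, so the self-avoidance defect has to be absorbed into the constant $c$ --- I expect to do this via an Aizenman--Fern\'andez-style argument, or by first dominating the FK-Ising measure from below by a long-range Bernoulli percolation and invoking the classical random-walk lower bound in that setting, following the ideas of~\cite{Duminil-Copin+Tassion-2016, Aoun+Ioffe+Ott+Velenik-CMP-2021}. A secondary technicality is the Tauberian identification of the decay rate of $G_{\mathrm{RW}}$, which is standard for subcritical random-walk Green functions but requires some care about the behaviour of $\bbJ$ near the boundary of its finiteness domain and about non-strict convexity of $\Wulff$ at $s$.
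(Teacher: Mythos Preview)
The paper does not prove this theorem: it is quoted from~\cite{Aoun+Ioffe+Ott+Velenik-CMP-2021} (see the introduction, where Theorem~\ref{thm:explicit_saturation} is explicitly attributed to that reference). So there is no ``paper's own proof'' to compare against here.

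That said, a few remarks on your outline. Your $(\Leftarrow)$ argument is correct and is exactly the special case $S=\{0\}$ of the paper's Lemma~\ref{lemma:phi(S)_argument}(1), since $\varphi_\beta(\{0\},t)=\beta\,\bbJ(t)$.

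For $(\Rightarrow)$, the strategy is sound but the step you flag as ``hardest'' is genuinely the crux, and your proposed routes are not equally viable. The random-current/finite-energy arguments you mention only produce lower bounds of the form $\Phi_\beta(0\leftrightarrow x)\geq\prod_i c\beta J_{e_i}$ along a \emph{single} self-avoiding path, i.e.\ a comparison with $G^{\mathrm{KRW}}_{c\beta}$ rather than with the full random-walk Green function $\sum_k(c\beta)^k J^{*k}$. Since $G^{\mathrm{KRW}}\leq G^{\mathrm{SRW}}$, knowing that the simple-random-walk decay rate is $<\rho(s)$ does not by itself give the same for the KRW, so you cannot simply ``absorb self-avoidance into $c$''. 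The approach of~\cite{Aoun+Ioffe+Ott+Velenik-CMP-2021} (as reflected in Theorem~\ref{thm:KRW_saturation} of the present paper) is rather to analyse the KRW model directly: one shows that the KRW itself is unsaturated, i.e.\ $\nu^{\mathrm{KRW}}_\lambda(s)<\rho(s)$ for every $\lambda>0$, whenever $\bbJ(t)=\infty$ on all duals of $s$. This is where the real work lies, and your Tauberian/Legendre-transform picture is the right intuition for it, but carried out for the KRW generating function rather than the geometric series. Your alternative via stochastic domination by Bernoulli percolation runs into the same issue: it again yields only a single-path (KRW-type) lower bound.
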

Note that $\bbJ(t)<\infty$ whenever $\psi(x)=\sfO(\rho(x)^{-d-\varepsilon})$ for some $\varepsilon>0$. An even more explicit (although a little bit less general) criterion ensuring the finitude of $\bbJ(t)$ was derived in~\cite{Aoun+Ioffe+Ott+Velenik-CMP-2021}. It was proved in~\cite{Duminil-Copin+Tassion-2016} that $\nu_{\betac}(s)=0$ for every $s\in\bbS^{d-1}$, and therefore one always has $\betasat(s)<\betac$ by the continuity of the function $\beta\mapsto\nu_{\beta}(s)$.

We also introduce a saturation point below the critical temperature in the direciton $s$ defined by 
\begin{equation*}
\betasat^{*}(s)=\sup\lbrace \beta\in [\betac,\infty): \nu_{\beta'}(s)=\rho(s)\hphantom{,} \forall\beta'>\beta\rbrace.
\end{equation*}

\section{Main results and conjectures}

\begin{theorem}\label{lemma:transformée_infini_point_satuation}
	For any $t\in\Wulff$ such that $\betahat(t)>0$, there exists $C:=C(\betahat(t))$ and a strictly increasing sequence $(n_{k})_{k=1}^{\infty}$ such that 
	\[
		\sum_{x\in\Lambda_{n_{k}}} \sfe^{t\cdot x} \Phi_{\betahat(t)}(0 \xleftrightarrow{\Lambda_{n_{k}}} x)\geq Ck.
	\]
In particular,
	\begin{equation*}
	\bbG_{\betahat(t)}(t)=\infty.
	\end{equation*}
	Moreover, if $\psi(x)=\sfO(\rho(x)^{-d-1-\varepsilon})$ for some $\varepsilon>0$, one can choose $n_{k}=k$. 
\end{theorem}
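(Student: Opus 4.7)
The plan is to use a Simon--Lieb-type finite-volume criterion, combined with a monotonicity/continuity argument, to force the divergence of the Laplace transform at $\betahat(t)$; extraction of the sequence $(n_k)$ then follows by monotone convergence, while the ``moreover'' part requires a refined recursion exploiting the rapid decay of $\psi$.

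\textbf{Step 1 (Simon--Lieb with boundary).} Starting from~\eqref{ineq:Simon-Lieb} with $S=\Lambda_n$, we decompose $\{0\leftrightarrow v\}$ according to whether the connection stays inside $\Lambda_n$ or first exits through an edge $\{x,y\}$ with $x\in\Lambda_n,\,y\notin\Lambda_n$:
$$\Phi_\beta(0\leftrightarrow v)\leq \Phi_\beta(0\xleftrightarrow{\Lambda_n} v)\,\mathds{1}_{v\in\Lambda_n}+\sum_{\substack{x\in\Lambda_n\\ y\notin\Lambda_n}}\Phi_\beta(0\xleftrightarrow{\Lambda_n}x)\,\beta J_{xy}\,\Phi_\beta(y\leftrightarrow v).$$
Multiplying by $\sfe^{t\cdot v}$, summing over $v\in\Zd$ and using translation invariance (so that $\sum_v\sfe^{t\cdot v}\Phi_\beta(y\leftrightarrow v)=\sfe^{t\cdot y}\bbG_\beta(t)$) gives the key inequality $\bbG_\beta(t)\leq G_n(\beta)+\varphi_n(\beta,t)\,\bbG_\beta(t)$, where
$$G_n(\beta):=\sum_{x\in\Lambda_n}\sfe^{t\cdot x}\Phi_\beta(0\xleftrightarrow{\Lambda_n}x), \qquad \varphi_n(\beta,t):=\beta\!\!\sum_{\substack{x\in\Lambda_n\\y\notin\Lambda_n}}\!\!\sfe^{t\cdot y}\Phi_\beta(0\xleftrightarrow{\Lambda_n}x)J_{xy}.$$
In particular, $\varphi_n(\beta,t)<1$ implies $\bbG_\beta(t)\leq G_n(\beta)/(1-\varphi_n(\beta,t))$.

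\textbf{Step 2 (divergence at $\betahat(t)$ and the sequence $(n_k)$).} Assume toward a contradiction that $\bbG_{\betahat(t)}(t)<\infty$. Since $\betahat(t)>0$, Theorem~\ref{thm:explicit_saturation} yields $\bbJ(t)<\infty$. Rewriting $\varphi_n(\betahat(t),t)=\betahat(t)\sum_x\sfe^{t\cdot x}\Phi_{\betahat(t)}(0\xleftrightarrow{\Lambda_n}x)T_n(x)$ with $T_n(x):=\sum_{y\notin\Lambda_n}\sfe^{t\cdot(y-x)}J_{0,y-x}$, we have $T_n(x)\to 0$ pointwise, $T_n(x)\leq\bbJ(t)$, and the summable dominator $\sfe^{t\cdot x}\Phi_{\betahat(t)}(0\leftrightarrow x)\bbJ(t)$; dominated convergence gives $\varphi_n(\betahat(t),t)\to 0$. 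Fix $n$ with $\varphi_n(\betahat(t),t)<1/2$; since $\beta\mapsto\varphi_n(\beta,t)$ is continuous (it involves only the FK measure on the finite graph $\Lambda_n$), there is $\beta>\betahat(t)$ with $\varphi_n(\beta,t)<1$, and Step~1 then forces $\bbG_\beta(t)<\infty$, contradicting the definition of $\betahat(t)$. Hence $\bbG_{\betahat(t)}(t)=+\infty$. The sequence $G_n(\betahat(t))$ is nondecreasing in $n$ and, by monotone convergence, tends to $\bbG_{\betahat(t)}(t)=+\infty$; setting $n_k:=\min\{n\geq 1 : G_n(\betahat(t))\geq k\}$ produces a strictly increasing sequence satisfying the first assertion with $C=1$.

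\textbf{Step 3 (linear growth under rapid decay).} For the ``moreover'' we exploit the converse direction in Step~1: since $\bbG_\beta(t)=+\infty$ for every $\beta>\betahat(t)$, the displayed inequality forces $\varphi_n(\beta,t)\geq 1$ for all $n$, and continuity in $\beta$ extends this to $\varphi_n(\betahat(t),t)\geq 1$ for every $n\geq 1$. Splitting the defining sum at $\Lambda_{n-r}$, bounding $T_n(x)\leq T(r):=\sum_{\rho(z)\geq r}\sfe^{t\cdot z}J_{0,z}$ for $x\in\Lambda_{n-r}$ and $T_n(x)\leq\bbJ(t)$ on the complementary shell, and using $G_n(\betahat(t))-G_{n-r}(\betahat(t))\geq\sum_{x\in\Lambda_n\setminus\Lambda_{n-r}}\sfe^{t\cdot x}\Phi_{\betahat(t)}(0\xleftrightarrow{\Lambda_n}x)$, we obtain the fundamental recursion
$$1\leq\betahat(t)\,T(r)\,G_n(\betahat(t))+\betahat(t)\,\bbJ(t)\bigl(G_n(\betahat(t))-G_{n-r}(\betahat(t))\bigr).$$
Under $\psi(x)=\sfO(\rho(x)^{-d-1-\varepsilon})$ the tail satisfies $T(r)=\sfO(r^{-1-\varepsilon})$, and a suitable choice $r=r(n)$ combined with iteration of this recursion should yield $G_n(\betahat(t))\geq Cn$. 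We expect this last step to be the main obstacle: a naive application only produces sub-linear growth, so obtaining genuine linear growth will require either sharpening the tail estimate by exploiting the anisotropy of $\sfe^{t\cdot z-\rho(z)}$ around the direction dual to $t$, or iterating the Simon--Lieb decomposition twice before summing, along the lines of the renewal arguments developed in~\cite{Aoun+Ioffe+Ott+Velenik-CMP-2021}.
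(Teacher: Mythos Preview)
Your Steps~1--2 are correct and run close to the paper's argument. Two minor points: the inference $\bbJ(t)<\infty$ does not follow from Theorem~\ref{thm:explicit_saturation} as stated (that theorem concerns $\betasat(s)$, not $\betahat(t)$); the clean way to get it is to take $\beta=\betahat(t)/2>0$ and use finite energy to obtain $\bbG_\beta(t)\geq c_\beta\bbJ(t)$, whence $\bbJ(t)<\infty$ since $\bbG_\beta(t)<\infty$. Also, your extraction of $(n_k)$ via $G_n\uparrow\infty$ is fine but a small care is needed to make the sequence \emph{strictly} increasing.

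The real gap is Step~3. Your recursion between $G_n$ and $G_{n-r}$ does not close, as you yourself note, and the anisotropy/renewal refinements you suggest would be overkill. The paper's argument is far simpler and bypasses any recursion: once you know $\varphi_n(\betahat(t),t)\geq 1$ for every $n$, \emph{sum this inequality over $n=1,\dots,l$ and swap the order of summation}. Concretely,
\[
l\;\leq\;\sum_{k=1}^{l}\varphi_k(\betahat(t),t)
\;\leq\;\sum_{x\in\Lambda_l}\sfe^{t\cdot x}\Phi_{\betahat(t)}(0\xleftrightarrow{\Lambda_l}x)\;\Bigl(\betahat(t)\!\!\sum_{k:\,x\in\Lambda_k}\sum_{y\notin\Lambda_k}\sfe^{t\cdot(y-x)}J_{xy}\Bigr).
\]
For the bracket, set $z=y-x$ and count, for each $z$, how many indices $k$ satisfy $x\in\Lambda_k$ but $x+z\notin\Lambda_k$: there are at most $\|z\|_\infty$ such $k$'s, \emph{independently of $x$}. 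Hence the bracket is bounded by $\betahat(t)\sum_z\|z\|_\infty\,\sfe^{t\cdot z}J_{0,z}$, which is finite precisely when $\psi(x)=\sfO(\rho(x)^{-d-1-\varepsilon})$ (using $t\cdot z\leq\rho(z)$). This gives $G_l(\betahat(t))\geq Cl$ directly, with $n_k=k$. For the general statement, the same swap works after thinning the sequence $(n_k)$ so that the tails $\sum_{y\notin\Lambda_{n_k}}\sfe^{t\cdot(y-x)}J_{xy}$ are summable in $k$; this is possible because $\bbJ(t)<\infty$ forces these tails to vanish as $k\to\infty$. So the ``moreover'' requires no new idea beyond Step~1, only the observation that summing $\varphi_k\geq 1$ over $k$ and interchanging sums yields a uniform bound on the $(k,y)$-sum.
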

Theorem~\ref{lemma:transformée_infini_point_satuation} has the following immediate Corollary.
\begin{corollary}\label{corollary:laplace_infinite_beta_sat}
Suppose that $\psi$ has one of the following forms:
\begin{itemize}
		\item \(\psi(x) = \rho(x)^{-\alpha}\) with $\alpha>2d$
		\item \(\psi(x) = \sfe^{-\tilde{c}\rho(x)^{\eta}}\) with \(\tilde{c}>0\) and \(\eta\in (0,1)\).
	\end{itemize}
	Then there exists $C>0$ such that for any $t$ dual to $s$
	\begin{equation*}
		\sum_{x\in\Lambda_{n}} \sfe^{t\cdot x} \Phi_{\betasat(s)}(0 \xleftrightarrow{\Lambda_{n}} x)\geq Cn.
	\end{equation*}
	In particular, $\bbG_{\betasat(s)}(t)=\infty$ for any $t$ dual to $s$.
\end{corollary}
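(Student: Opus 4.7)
The plan is to reduce the corollary to the sharper form of Theorem~\ref{lemma:transformée_infini_point_satuation} (the one valid when $\psi(x)=\sfO(\rho(x)^{-d-1-\varepsilon})$, which gives the bound with $n_{k}=k$) and combine it with the monotonicity of FK-Ising connection probabilities in $\beta$. Three things need to be verified: (i) the decay hypothesis on $\psi$; (ii) positivity $\betahat(t)>0$ for every $t$ dual to $s$; and (iii) that the bound obtained at $\beta=\betahat(t)$ can be lifted to $\beta=\betasat(s)$.

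For (i), I would simply observe that in the polynomial case $\psi(x)=\rho(x)^{-\alpha}$ with $\alpha>2d$, the choice $\varepsilon=\alpha-d-1>0$ works (using $2d\geq d+1$), and in the stretched-exponential case $\psi(x)=\sfe^{-\tilde c\rho(x)^{\eta}}$ the hypothesis is trivial since $\psi$ decays faster than any polynomial. For (ii), I would fix $t$ dual to $s$ and note that since $t\in\Wulff$ one has $t\cdot x\leq\rho(x)$, so $\bbJ(t)\leq\sum_{x}\psi(x)<\infty$ in both cases. A standard iteration of the Simon--Lieb inequality~\eqref{ineq:Simon-Lieb} with $S=\{0\}$ then yields
\[
\Phi_{\beta}(0\leftrightarrow x)\leq\sum_{N\geq 0}(\beta J)^{*N}(x),
\]
where $(\beta J)^{*N}$ denotes the $N$-fold convolution; taking the $t$-Laplace transform gives $\bbG_{\beta}(t)\leq 1/(1-\beta\bbJ(t))$ whenever $\beta\bbJ(t)<1$, whence $\betahat(t)\geq 1/\bbJ(t)>0$.

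Having verified (i) and (ii), Theorem~\ref{lemma:transformée_infini_point_satuation} produces a constant $C=C(\betahat(t))>0$ with $\sum_{x\in\Lambda_{n}}\sfe^{t\cdot x}\Phi_{\betahat(t)}(0\xleftrightarrow{\Lambda_{n}}x)\geq Cn$ for every $n\geq 1$. For step (iii), I would invoke the identity $\betahat(s)=\betasat(s)$ from~\cite{Aoun+Ott+Velenik-2022} recalled in the excerpt, which gives $\betasat(s)\geq\betahat(t)$; since $\{0\xleftrightarrow{\Lambda_{n}}x\}$ is an increasing event and the FK-Ising measure is stochastically monotone in $\beta$, the connection probability is monotone and the lower bound survives the replacement $\betahat(t)\mapsto\betasat(s)$ term by term. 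The real difficulty sits entirely inside Theorem~\ref{lemma:transformée_infini_point_satuation}; the corollary itself presents no serious obstacle beyond this verification plus the monotonicity argument.
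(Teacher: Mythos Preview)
Your proposal is correct and follows essentially the same route as the paper: invoke the identity \(\betasat(s)=\betahat(s)\) from~\cite{Aoun+Ott+Velenik-2022}, apply Theorem~\ref{lemma:transformée_infini_point_satuation} at \(\betahat(t)\leq\betahat(s)=\betasat(s)\), and pass to \(\betasat(s)\). The paper's proof is two lines and leaves the monotonicity step and the verifications of (i) and (ii) implicit; your write-up simply makes these explicit (in particular your Simon--Lieb iteration with \(S=\{0\}\) amounts to checking \(\varphi_\beta(\{0\},t)=\beta\,\bbJ(t)<1\), which is precisely item~1 of Lemma~\ref{lemma:phi(S)_argument} specialized to the singleton).
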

\begin{proof}
It was proved in~\cite{Aoun+Ott+Velenik-2022} that $\betasat(s)=\betahat(s)$ under the assumptions of Corollary~\ref{corollary:laplace_infinite_beta_sat}. Therefore, the conclusion follows from Theorem~\ref{lemma:transformée_infini_point_satuation} since $\betahat(t)\leq\betahat(s)$.

\end{proof}

The next result will give a description of the saturation phenomenon as a function of the direction $s$: if $\Wulff$ is regular locally in a strictly saturated direction $s$ (in the sense that $\beta<\betasat(s)$), then there exists a neighborhood of $s$ for which all the directions are strictly saturated.
\begin{lemma}\label{lemma:local_saturation}
Fix $t\in\Wulff$ and assume that $\Wulff$ is locally strictly convex and $C^{1}$. Fix the unique direction $s\in\bbS^{d-1}$ dual to $t$. Assume that $\betasat(s)=\betahat(s)$ locally and that there exists \(\delta>0\) such that \(\bbJ(h)<\infty\) for all \(h\in\partial\Wulff\cap B_{\delta}(t)\). Then, for every \(\beta<\betasat(s)\), there exists \(\varepsilon>0\) such that for any $s'\in\bbS^{d-1}\cap B_{\varepsilon}(s)$, $\beta<\betasat(s')$.
\end{lemma}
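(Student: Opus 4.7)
The plan is to combine a Gauss map argument with a Simon--Lieb renewal bound that promotes the strict inequality $\beta<\betahat(t)$ into finiteness of $\bbG_{\beta}$ at all nearby dual vectors.

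First, local strict convexity and $C^{1}$-regularity of $\Wulff$ at $t$ make the Gauss map a local homeomorphism between a neighborhood of $t$ in $\partial\Wulff$ and a neighborhood of $s$ in $\bbS^{d-1}$; its inverse $s'\mapsto t(s')$ is continuous with $t(s)=t$, and $t(s')$ is the unique vector of $\Wulff$ dual to $s'$. Combined with the local equality $\betasat=\betahat$, this yields $\betasat(s')=\betahat(t(s'))$ for $s'$ close to $s$, so the lemma reduces to showing $\bbG_{\beta}(t(s'))<\infty$ for such $s'$. Since $\beta<\betasat(s)=\betahat(t)$, pick $\beta^{*}\in(\beta,\betahat(t))$, so that $\bbG_{\beta^{*}}(t)<\infty$; FK monotonicity in $\beta$ further reduces the task to showing $\bbG_{\beta^{*}}(t(s'))<\infty$ for $s'$ near $s$.

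The heart of the argument is a Simon--Lieb renewal bound with a large finite window $\Lambda_{L}$. Summing the inequality~\eqref{ineq:Simon-Lieb} (with $S=\Lambda_{L}$, $u=0$) against $e^{t'\cdot v}$, and handling the case $v\in\Lambda_{L}$ by the elementary bound $\Phi_{\beta^{*}}(0\leftrightarrow v)\le\Phi_{\beta^{*}}(0\xleftrightarrow{\Lambda_{L}}v)+\text{exit term}$, one obtains, first in finite volume and then passing to the limit,
\[
\bbG_{\beta^{*}}(t')\;\le\;\bbG^{\Lambda_{L}}_{\beta^{*}}(t')+\alpha_{L}(t')\,\bbG_{\beta^{*}}(t'),\qquad \alpha_{L}(t'):=\beta^{*}\!\!\!\sum_{\substack{y\in\Lambda_{L}\\ z\notin\Lambda_{L}}}\!\!\!\Phi_{\beta^{*}}(0\xleftrightarrow{\Lambda_{L}}y)\,J_{y,z}\,e^{t'\cdot z},
\]
where $\bbG^{\Lambda_{L}}_{\beta^{*}}(t'):=\sum_{x\in\Lambda_{L}}e^{t'\cdot x}\Phi_{\beta^{*}}(0\xleftrightarrow{\Lambda_{L}}x)$ is a finite sum. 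If $\alpha_{L}(t')<1$, rearranging gives $\bbG_{\beta^{*}}(t')\le\bbG^{\Lambda_{L}}_{\beta^{*}}(t')/(1-\alpha_{L}(t'))<\infty$. The main work is to arrange $\alpha_{L}(t)\le\tfrac{1}{2}$ for some $L$: writing $z=y+u$ and splitting the $y$-sum according to whether $\mathrm{dist}(y,\partial\Lambda_{L})\ge R$ (``deep'' part, where the inner $u$-sum is bounded by the tail $\sum_{|u|>R}\psi(u)e^{t\cdot u-\rho(u)}$, made as small as $\eta$ using $\bbJ(t)<\infty$) or $<R$ (``boundary'' part, where the inner sum is at most $\bbJ(t)$), one obtains
\[
\alpha_{L}(t)\;\le\;\beta^{*}\eta\,\bbG_{\beta^{*}}(t)+\beta^{*}\bbJ(t)\!\!\sum_{|y|\ge L-R}\!\!\Phi_{\beta^{*}}(0\leftrightarrow y)\,e^{t\cdot y},
\]
and choosing $\eta$ so that the first term is $\le 1/4$ (uses $\bbG_{\beta^{*}}(t)<\infty$) and then $L$ large so that the tail of $\bbG_{\beta^{*}}(t)$ makes the second term $\le 1/4$, yields $\alpha_{L}(t)\le 1/2$.

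For this fixed $L$, the function $t'\mapsto\alpha_{L}(t')$ is a finite sum over $y\in\Lambda_{L}$ of terms continuous in $t'$ on the set where $\bbJ$ is finite, and the hypothesis $\bbJ<\infty$ on $\partial\Wulff\cap B_{\delta}(t)$ (together with trivial finiteness in the interior of $\Wulff$) yields continuity of $t'\mapsto\alpha_{L}(t')$ on a $\partial\Wulff$-neighborhood of $t$. Hence $\alpha_{L}(t(s'))<1$ for $s'$ close enough to $s$, so $\bbG_{\beta^{*}}(t(s'))<\infty$ and therefore $\betasat(s')\ge\betahat(t(s'))\ge\beta^{*}>\beta$. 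The principal obstacle is the contraction estimate $\alpha_{L}(t)<1$: the two sources of smallness (the $\bbJ$-tail for $y$ deep in $\Lambda_{L}$ and the $\bbG_{\beta^{*}}$-tail for $y$ near $\partial\Lambda_{L}$) must be balanced simultaneously, and it is the hypothesis that $\bbJ$ is finite on an entire neighborhood of $t$ (rather than just at the point) that allows this contraction to survive the perturbation $t\rightsquigarrow t(s')$.
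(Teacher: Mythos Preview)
Your proof is correct and follows essentially the same route as the paper: your contraction factor $\alpha_L(t')$ is exactly the paper's $\varphi_{\beta^*}(\Lambda_L,t')$, and both arguments conclude by continuity of this quantity in the dual vector, using the local finiteness of $\bbJ$. The only difference is packaging: the paper invokes Lemma~\ref{lemma:phi(S)_argument} (the identity $\tilde\beta_{\mathrm{sat}}(t)=\betahat(t)$) to obtain directly a finite $S$ with $\varphi_\beta(S,t)<1$, whereas you re-derive this by hand for $S=\Lambda_L$ via a deep/boundary split (which is a perfectly valid alternative to the multi-scale summation used in the proof of that lemma).
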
 
The next result gives the asymptotics of the two-point function at $\betasat(1)$ on $\bbZ$.
\begin{theorem}\label{thm:OZ_sur_Z}
Fix $d=1$. Suppose that
\begin{itemize}
		\item \(\psi(x) = \abs{x}^{-\alpha}\) with $\alpha>2$
		\item \(\psi(x) = \sfe^{-\tilde{c}\abs{x}^{\eta}}\) with \(\tilde{c}>0\) and \(\eta\in (0,1)\).
	\end{itemize}
Then, there exists $C_{-}>0$ such that for any $x\in\bbZ$, one has
\begin{equation*}
C_{-}\leq \sfe^{\rho(x)}\Phi_{\betasat(1)}(0\leftrightarrow x)\leq 1.
\end{equation*}
\end{theorem}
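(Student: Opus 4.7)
My plan is to split the two bounds. The upper bound is immediate from subadditivity: since $\betasat(1) < \betac$, the truncated and non-truncated two-point functions agree, so $\Phi_{\betasat(1)}(0 \leftrightarrow x) = \langle \sigma_0;\sigma_x\rangle_{\betasat(1)}$, and combining~\eqref{ineq:subaddivitiy} with $\nu_{\betasat(1)}(1) = \rho(1)$ directly yields $\Phi_{\betasat(1)}(0 \leftrightarrow x) \leq \sfe^{-\rho(x)}$ for every $x \in \bbZ$. For the matching lower bound I will extract a uniform pointwise estimate from Corollary~\ref{corollary:laplace_infinite_beta_sat} by combining FKG supermultiplicativity with a Schnirelmann-type additive-basis argument.

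\textit{Setup for the lower bound.} Set $f(x) := \sfe^{\rho(x)} \Phi_{\betasat(1)}(0 \leftrightarrow x)$. By the reflection symmetry $x \mapsto -x$ of $J$ and $\rho$ I restrict to $x \geq 0$. Three ingredients will drive the argument. (i) $f \leq 1$, from the upper bound just proved. (ii) $\sum_{x=0}^{n} f(x) \geq c_1 n$ for every large $n$: indeed, Corollary~\ref{corollary:laplace_infinite_beta_sat} applied at the unique dual vector $t = \rho(1)$ of $s = 1$ gives $\sum_{x \in \Lambda_n} \sfe^{t x} \Phi_{\betasat(1)}(0 \leftrightarrow x) \geq C n$, and by (i) and reflection symmetry the contribution of $x < 0$ is bounded uniformly in $n$ by $\sum_{k \geq 1} \sfe^{-2 \rho(1) k} < \infty$. (iii) Supermultiplicativity $f(x+y) \geq f(x) f(y)$ for $x,y \geq 0$: by FKG and translation invariance of the (unique, since $\betasat(1) < \betac$) infinite-volume FK-Ising measure, $\Phi_{\betasat(1)}(0 \leftrightarrow x+y) \geq \Phi_{\betasat(1)}(0 \leftrightarrow x)\Phi_{\betasat(1)}(x \leftrightarrow x+y) = \Phi_{\betasat(1)}(0 \leftrightarrow x)\Phi_{\betasat(1)}(0 \leftrightarrow y)$.

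\textit{Passage to a pointwise bound.} Let $G := \{x \geq 0 : f(x) \geq c_1/2\}$. Splitting the sum in (ii) according to membership in $G$ and using $f \leq 1$ yields $\abs{G \cap [0,n]} \geq (c_1/2) n - O(1)$, so $G$ has positive lower asymptotic density. Since $\Phi_{\betasat(1)}(0 \leftrightarrow r) > 0$ for every $r$ (open a nearest-neighbor path by finite energy), I can enlarge $G$ by adjoining finitely many integers so that $G \cup \{0\}$ has positive Schnirelmann density in the non-negative integers. By (iii), the $k$-fold sumset $G + \cdots + G$ ($k$ copies) is contained in $\{x : f(x) \geq (c_1/2)^k\}$ for every $k \geq 1$. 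Schnirelmann's theorem (a set of positive Schnirelmann density containing $0$ is an additive basis of finite order) then produces $k$ such that this $k$-fold sumset covers every sufficiently large integer; combined with the strict positivity of $f$ on the finitely many remaining values, I obtain $\inf_{x \in \bbZ} f(x) > 0$, which completes the proof.

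\textit{Main obstacle.} The non-routine step is the passage from the mere divergence of the Laplace transform at $\betasat(1)$ to a genuine pointwise lower bound on $\sfe^{\rho(x)} \Phi_{\betasat(1)}(0 \leftrightarrow x)$; it is the FKG supermultiplicativity of $\Phi_{\betasat(1)}(0 \leftrightarrow \cdot)$, combined with the linear-in-$n$ growth of the partial sums, that makes the additive-basis argument go through. The restriction to $d = 1$ enters crucially in two places: uniqueness of the dual vector to $s = 1$ makes Corollary~\ref{corollary:laplace_infinite_beta_sat} directly applicable, and the good set $G$ lives in $\bbZ$ where Schnirelmann-type sumset arguments are readily available; a higher-dimensional analog of this basis step would require significant new input.
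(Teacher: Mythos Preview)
Your proof is correct. The upper bound and the setup for the lower bound (linear growth of the partial sums, positive density of the set $G$ of ``good'' points) match the paper exactly. The divergence comes in how one passes from positive density of $G$ to a uniform pointwise lower bound.

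The paper asserts directly that the good points are \emph{syndetic}: there exist $R,c>0$ such that every window $\{m,\dots,m+R\}$ contains a $k$ with $f(k)\geq c$; it then bridges the gap of length at most $R$ by opening finitely many nearest-neighbour edges (finite energy plus FKG). As written, however, the syndetic claim does not follow from the two stated ingredients alone (linear growth of $\sum_{k\le n} f(k)$ together with $f\le 1$): one can build $f\in[0,1]$ with $\sum_{k\le n} f(k)\geq n/3$ for all $n$ whose support has arbitrarily long gaps. Supermultiplicativity is what rules this out, and the paper only invokes FKG at the very last line, so the argument is at best telegraphic at this step.

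Your route avoids this by making the role of supermultiplicativity explicit from the outset and trading the syndetic claim for Schnirelmann's theorem: positive lower density of $G$ (upgraded to positive Schnirelmann density by adjoining finitely many integers, on which $f>0$ by finite energy) gives a finite $h$ with $hG'=\bbN_0$, and then $f\geq (c')^{h}$ everywhere by iterated FKG. This is a heavier tool than the paper's intended finite-energy bridging, but it makes the passage from ``density'' to ``pointwise'' completely transparent. Note that once your conclusion $\inf_k f(k)>0$ is established, the paper's syndetic claim follows trivially (with $R=0$), so in effect your argument supplies a rigorous proof of the step the paper leaves implicit.
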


Our next result shows that a non-trivial saturation regime can exist even at arbitrarily low temperatures for the truncated two-point function.
\begin{theorem}\label{thm:saturation_truncated}
Let $d\geq 2$ and $s\in\bbS^{d-1}$.  Suppose that $J_{e}>0$ for any edge of length 1. If there exists $t\in\partial\calW$ dual to $s$ such that $\bbJ(t)<\infty$, then there exists $\beta_{0}$ such that $\betasat^{*}(s)<\beta_{0}$. Moreover, for any $\beta>\beta_{0}$ there exists $C_{-},C_{+}>0$ such that
\begin{equation*}
C_{-}J_{ns}\leq \langle\sigma_{0};\sigma_{ns}\rangle_{\beta}\leq C_{+}J_{ns}.
\end{equation*}
\end{theorem}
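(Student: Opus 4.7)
The approach rests on the random-current identity~\eqref{eq:truncated_to_current},
\[
\langle\sigma_{0};\sigma_{ns}\rangle_{\Lambda;\beta}
= \langle\sigma_{0}\sigma_{ns}\rangle_{\Lambda;\beta}\,
\p^{\emptyset,\{0,ns\}}_{\Lambda^{\g};\beta}\bigl[0\nleftrightarrow\g\bigr],
\]
whose first factor lies in $[c_{1},1]$ uniformly in $ns$ for $\beta>\beta_{1}>\betac$ by a standard low-temperature Peierls estimate. It therefore suffices to produce matching lower and upper bounds of order $J_{ns}$ on the probability factor. For the lower bound, I would isolate the sub-event $E$ on which $\n_{2}$ takes value $1$ on the direct edge $\{0,ns\}$ and $0$ on every other edge incident to $\{0,ns\}$, while $\n_{1}$ vanishes on every edge incident to $\{0,ns\}$ (including the ghost edges). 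On $E$, $\calC_{0}(\n_{1}+\n_{2})=\{0,ns\}$, so $0\nleftrightarrow\g$; combining~\eqref{ineq:finite_energy_lower_bound} on the direct edge in $\n_{2}$ with~\eqref{ineq:useful_closing} on each vanishing edge, and using summability of $J$ (a consequence of $\bbJ(t)<\infty$), one obtains $\p^{\emptyset,\{0,ns\}}[E]\geq c(\beta)J_{ns}$, hence $C_{-}J_{ns}\leq\langle\sigma_{0};\sigma_{ns}\rangle_{\beta}$ after $\Lambda\uparrow\bbZ^{d}$.

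For the upper bound, $\partial\n_{2}=\{0,ns\}$ forces the support of $\n_{2}$ to contain a self-avoiding edge path $\gamma=(e_{1},\dots,e_{k})$ from $0$ to $ns$, along which I would decompose
\[
\p^{\emptyset,\{0,ns\}}\bigl[0\nleftrightarrow\g\bigr]
\leq \sum_{\gamma}\p^{\emptyset,\{0,ns\}}\bigl[\gamma\subseteq\supp(\n_{2}),\,0\nleftrightarrow\g\bigr],
\]
running over (say) the lexicographically first such $\gamma$. The direct-edge term $\gamma=(\{0,ns\})$ is bounded by $C\beta J_{ns}$ via~\eqref{ineq:like_finite_energy}. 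For $|\gamma|\geq 2$ I would combine~\eqref{ineq:like_finite_energy}, giving a pathwise factor $C\beta^{k}\prod_{i}J_{e_{i}}$, with a Peierls-type estimate asserting that conditionally on $\gamma\subseteq\supp(\n_{2})$, disconnecting every vertex of $\gamma$ from $\g$ in $\n_{1}+\n_{2}$ costs at least $\sfe^{-c(\beta)(k-1)}$ with $c(\beta)\to\infty$ as $\beta\to\infty$. Extracting $\sfe^{-\rho(ns)}$ from $\prod J_{e_{i}}$ by the triangle inequality $\sum_{i}\rho(e_{i})\geq\rho(ns)$, and using $\bbJ(t)<\infty$ together with a one-big-jump-type control on the convolutions $(\psi\cdot\sfe^{t\cdot})^{*k}(ns)$, the remaining sum over $\gamma$ of length $k$ is controlled by a multiple of $J_{ns}$ times $\bigl(\beta\sfe^{-c(\beta)}\bbJ(t)\bigr)^{k-1}$, which sums to $C(\beta)J_{ns}$ once $\beta$ is large enough that $\beta\sfe^{-c(\beta)}\bbJ(t)<1$.

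The technical crux is twofold: (a)~the conditional Peierls bound from $\gamma\subseteq\supp(\n_{2})$ to $\{0\nleftrightarrow\g\}$, which requires a contour-type argument inside random currents and uses $d\geq 2$ together with $J_{e}>0$ on length-one edges to guarantee both an isoperimetric lower bound of order $|\gamma|$ on the number of short edges in any insulating layer around $\gamma$, and a strictly positive per-edge cost through~\eqref{ineq:useful_closing}; and (b)~the pointwise one-big-jump estimate needed to collapse the path sum back to $J_{ns}$ (rather than only to $\sfe^{-\rho(ns)}$), which exploits the asymptotic behaviour of $\psi$ encoded in $\bbJ(t)<\infty$ and the fact that $t$ saturates $\rho$ in the direction $s$. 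Once both are secured, the matching bounds $C_{-}J_{ns}\leq\langle\sigma_{0};\sigma_{ns}\rangle_{\beta}\leq C_{+}J_{ns}$ hold for every $\beta>\beta_{0}$, yielding $\nu_{\beta}(s)=\rho(s)$ and therefore $\betasat^{*}(s)<\beta_{0}$.
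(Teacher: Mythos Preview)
Your outline follows the paper's strategy closely: both start from the random-current identity~\eqref{eq:truncated_to_current}, extract a self-avoiding path from $0$ to $ns$ in the current, combine the edge-wise cost $C\beta^{k}\prod_{i}J_{e_i}$ from~\eqref{ineq:like_finite_energy} with a low-temperature Peierls factor coming from closed nearest-neighbour edges via~\eqref{ineq:useful_closing}, and feed the resulting weighted path sum into a one-big-jump estimate. The paper packages that last step as a comparison with a killed-random-walk Green function $G^{\mathrm{KRW}}_{\lambda(\beta)}$ and then invokes Theorem~\ref{thm:KRW_saturation} (proved in~\cite{Aoun+Ioffe+Ott+Velenik-CMP-2021}); this is exactly your point~(b), so that part is sound. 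Your lower bound is also fine: the paper simply quotes~\eqref{ineq:lower_bound_truncated}, but the direct random-current construction you describe is equivalent.

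The genuine gap is in your point~(a). The claim that any insulating layer around $\gamma$ contains of order $|\gamma|$ short edges is false for a generic self-avoiding path: a nearly space-filling nearest-neighbour walk of length $k$ inside a box of side $k^{1/d}$ is disconnected from $\partial\Lambda$ (equivalently from $\g$) by a closed outer contour of only $O(k^{(d-1)/d})$ short edges, so the Peierls factor is at best $\sfe^{-c\beta k^{(d-1)/d}}$. Since for such paths $\prod_i J_{e_i}=1$, this does not kill the path cost $\beta^{k}$ times the entropy $\mu^{k}$ of nearest-neighbour walks, and the sum over $\gamma$ diverges for large $\beta$. The paper repairs this not by a better isoperimetric inequality but by a different choice of the path: it first decomposes $\calC_{0,ns}$ into its nearest-neighbour connected components $[\,\cdot\,]$, then takes inside each component a \emph{minimal-length} basic path $\alpha_k$ between the entry and exit points, and exploits that the dual boundary $\partial^{*}[\gamma_{r_k}]$ (or an associated dual path $\alpha_k^{*}$ with a uniformly positive density of closed short edges) automatically satisfies $|\alpha_k^{*}|\geq|\alpha_k|$. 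This is precisely what delivers an honest per-edge factor $C\sfe^{-c\beta}$ along the basic portion of $\gamma$ and hence the product form $\prod_{i}C\sfe^{-c\beta}\beta J_{\gamma_{i-1}\gamma_i}$ that becomes $G^{\mathrm{KRW}}_{\lambda(\beta)}(0,ns)$ with $\lambda(\beta)\to 0$. With the lexicographically-first path you propose, step~(a) does not close; replacing it by the cluster-adapted path above is the missing ingredient.
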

\begin{remark}
Notice that in Theorem~\ref{thm:saturation_truncated}, we take $d\geq 2$. This assumption is necessary, since by definition one has $\betasat^{*}(s)\geq\betac$, and $\betac=\infty$ on $\bbZ$. This is in contrast with what happens at high temperatures, in which case Theorem~\ref{thm:explicit_saturation} holds.
\end{remark}
Theorem~\ref{thm:saturation_truncated} is in contrast with what happens in the finite-range Ising model, in which case it was proved in~\cite{Bricmont+Frohlich-1985b} that the truncated two-point function satisfies OZ asymptotics on $\bbZ^{d}$ with $d\geq 3$ (see also~\cite{Campanino+Gianfelice-2015}).

Our work suggests a number of conjectures and open problems that we summarize now. 

\subsubsection{Behaviour at $\betasat(s)$}

Theorem~\ref{thm:OZ_sur_Z} suggests that the OZ asymptotics should hold at $\betasat(s)$ whenever $\psi$ decays fast enough.
\begin{conjecture}
For any $\psi$ decaying fast enough, the conclusion of Theorem~\ref{thm:OZ_sur_Z} holds on $\bbZ^{d}$.
\end{conjecture}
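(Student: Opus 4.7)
The inequality $C_{-}J_{ns}\leq \langle\sigma_{0};\sigma_{ns}\rangle_{\beta}$ holds uniformly in $\beta>0$ and is exactly~\eqref{ineq:lower_bound_truncated}, so the content of the theorem is really the upper bound $\langle\sigma_{0};\sigma_{x}\rangle_{\beta}\leq C_{+}J_{0,x}$ for $\beta$ large and every $x$. Note that this pointwise upper bound already implies the saturation statement $\betasat^{*}(s)<\beta_{0}$: since $J_{0,ns}=\psi(ns)\sfe^{-n\rho(s)}$ with $\psi$ subexponential,
\begin{equation*}
\nu_{\beta}(s)=-\lim_{n}\tfrac{1}{n}\log\langle\sigma_{0};\sigma_{ns}\rangle_{\beta}\;\geq\;\rho(s)-\lim_{n}\tfrac{1}{n}\log\bigl(C_{+}\psi(ns)\bigr)=\rho(s),
\end{equation*}
and combining this with the universal bound $\nu_{\beta}(s)\leq\rho(s)$ yields $\nu_{\beta}(s)=\rho(s)$, hence $\beta>\betasat^{*}(s)$.

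To prove the upper bound I would work in the random current representation. The identity~\eqref{eq:truncated_to_current} gives $\langle\sigma_{0};\sigma_{x}\rangle_{\Lambda;\beta}\leq \p^{\emptyset,\{0,x\}}_{\Lambda^{\g};\beta}[0\nleftrightarrow\g]$, and I would split this event according to whether the edge $\{0,x\}$ carries flow in $\n_{2}$. The \emph{direct} contribution $\{\n_{2}(\{0,x\})\geq 1\}$ is bounded by $C\beta J_{0,x}$ via~\eqref{ineq:like_finite_energy}, which already has the correct order. In the \emph{indirect} contribution $\{\n_{2}(\{0,x\})=0,\,0\nleftrightarrow\g\}$, the cluster $\calC$ of $0$ in the sum current $\n_{1}+\n_{2}$ must be finite, contain $x$, and be traversed by an $\n_{2}$-path of length $\geq 2$. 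I would parameterize this cluster by a skeleton self-avoiding edge path $\gamma=(0=v_{0},v_{1},\dots,v_{k}=x)$ of length $k\geq 2$ in $\n_{2}$ and, combining~\eqref{ineq:like_finite_energy}--\eqref{ineq:useful_closing}, bound its contribution by $\prod_{i}C\beta J_{v_{i-1}v_{i}}$ for the open edges of $\gamma$ times a factor $\sfe^{-c\beta J_{\mathrm{tot}}}$ at each visited vertex, the latter coming from closing all other edges leaving $V(\gamma)$. Since $J_{e}>0$ on edges of length $1$, one has $J_{\mathrm{tot}}\geq c_{0}>0$, so the total ``surface cost'' of the skeleton is at most $\sfe^{-c\beta k}$.

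The main obstacle will be organizing the resulting sum so as to reproduce the full subexponential prefactor $\psi(x)$ on the right-hand side, rather than just $\sfe^{-\rho(x)}$. Summing over skeletons of length $k$ produces the convolution $(J^{*k})_{0,x}$; using $\bbJ(t)<\infty$ for some $t$ dual to $s$, a Chebyshev bound gives $(J^{*k})_{0,x}\leq \sfe^{-\rho(x)}\bbJ(t)^{k}$, so the sum in $k\geq 2$ is a geometric series with ratio of order $\beta\bbJ(t)\sfe^{-c\beta}$, finite and small once $\beta$ is large; but this bound loses the $\psi(x)$ factor. To recover it, I would isolate in each skeleton one ``long'' step, for instance the edge $\{v_{j-1},v_{j}\}$ of maximal $\rho$-length, which already contains a factor $\psi(v_{j}-v_{j-1})\sfe^{-\rho(v_{j}-v_{j-1})}$ of the correct $J_{0,x}$-order when $\gamma$ is approximately aligned with a geodesic for $\rho$, and sum the remaining short steps using $\bbJ(t)$-summability together with the convexity of $\rho$ on $\Wulff$. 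Putting everything together gives $\langle\sigma_{0};\sigma_{x}\rangle_{\beta}\leq (C+C'\sfe^{-c\beta})\beta J_{0,x}\leq C_{+}J_{0,x}$, which is the desired bound once $\beta_{0}$ is chosen large enough.
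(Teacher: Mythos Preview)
You have misidentified the statement. The item you were asked to address is a \emph{conjecture}; the paper does not prove it, so there is no ``paper's own proof'' to compare against. More importantly, your proposal is not an attempt at this conjecture at all: it is (a sketch of) a proof of Theorem~\ref{thm:saturation_truncated}, the low-temperature saturation result. You speak of $\beta$ large, of the truncated correlation $\langle\sigma_{0};\sigma_{x}\rangle_{\beta}$, of $\betasat^{*}(s)$, and of the upper bound $\leq C_{+}J_{0,x}$; none of these objects appear in the conjecture.

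The conjecture concerns the \emph{high-temperature} two-point function $\Phi_{\betasat(s)}(0\leftrightarrow x)$ evaluated exactly at the saturation point $\betasat(s)<\betac$, and asks whether the Ornstein--Zernike-type bound
\[
C_{-}\;\leq\;\norm{x}^{\frac{d-1}{2}}\,\sfe^{\rho(x)}\,\Phi_{\betasat(s)}(0\leftrightarrow x)\;\leq\;C_{+}
\]
(the natural $d$-dimensional analogue of Theorem~\ref{thm:OZ_sur_Z}) holds on $\bbZ^{d}$. The difficulty here is precisely that the techniques used in the non-saturated regime $(\betasat(s),\betac)$ rely on a strict mass gap $\nu_{\beta}(s)<\rho(s)$, while those used in the saturated regime $(0,\betasat(s))$ rely on $\nu_{\beta}$ being locally constant in $\beta$; both hypotheses fail at $\betasat(s)$. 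Your random-current argument with a Peierls-type surface cost $\sfe^{-c\beta k}$ is tailored to $\beta\to\infty$ and says nothing about this fixed subcritical value. As the paper explicitly notes after stating the conjecture, the expected behaviour at $\betasat(s)$ is delicate and can in fact fail to be OZ depending on $d$, $\rho$ and $\psi$; this remains open.
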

However, this is easily seen not to be true in general. To see that, fix $\rho(\cdot)=\norm{\cdot}_{1}$. Using the results of~\cite{Aoun+Ioffe+Ott+Velenik-CMP-2021}, it can easily be seen that for $\psi(x)=\rho(x)^{-\alpha}$, $\betasat(e_{1})>0$ whenever $\alpha>1$. However, one always has the lower bound
\begin{equation*}
CJ_{x}\leq\langle\sigma_{0}\sigma_{x}\rangle_{\beta}.
\end{equation*}
This shows that OZ asymptotics cannot hold in this case whenever $d>3$.
\begin{openproblem}
Caracterise all possible behaviours of the two-point function at $\betasat(s)$ in function of the dimension and $\rho$.
\end{openproblem}
We expect that the OZ asymptotics could fail at $\betasat$ for two different reasons:
\begin{enumerate}
\item The dominant contribution to the FK-Ising two-point function comes from configurations with $\abs{\calC_{0}}=\sfo(n)$.

\item The dominant contribution to the FK-Ising two-point function comes from configurations with $\abs{\calC_{0}}=\sfO(n)$ (as is the case in the OZ regime), but the steps of the associated effective random walk don't have two moments, and so the usual local limit theorem does not hold (however, there has been results on the non-OZ asymptotic behaviour of the Green function in this case, see~\cite{Berger-2020} and references therein).
\end{enumerate}
We plan to come back to this issue in a simpler context of the killed random walk (see section~\ref{section:saturation} for the definition of this model).

\subsubsection{Behaviour for $\beta>\betac$}
In the case of exponentially decaying coupling constants, Theorem~\ref{thm:saturation_truncated} implies the exponential decay of the two-point function for $\beta$ large enough whenever $\psi$ decays fast enough. We expect this to hold more generally below the critical temperature.
\begin{conjecture}\label{conjecture:expo_decay}
If there exists $c>0$ such that $J_{x}\leq \sfe^{-c\norm{x}}$, then $\nu_{\beta}(s)>0$ for $\beta>\betac$ and $s\in\bbS^{d-1}$.
\end{conjecture}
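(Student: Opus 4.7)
The plan is to adapt, to the setting of exponentially decaying coupling constants, the classical chain of arguments yielding exponential decay of the truncated two-point function in the supercritical phase via the FK-Ising representation and a Pisztora-type renormalization. Let $\Phi^{\wired}_{\beta}$ denote the infinite-volume wired FK-Ising measure, which is the Edwards--Sokal counterpart of $\mu^{+}_{\beta}$. Since $\Phi^{\wired}_{\beta}$ admits an almost surely unique infinite cluster for every $\beta>\betac$, the identities $\langle\sigma_{0}\sigma_{x}\rangle^{+}_{\beta}=\Phi^{\wired}_{\beta}[0\leftrightarrow x]$ and $\langle\sigma_{0}\rangle^{+}_{\beta}=\Phi^{\wired}_{\beta}[0\leftrightarrow\infty]$ yield the decomposition
\begin{equation*}
\trn_{\beta}=\Phi^{\wired}_{\beta}\bigl[0\leftrightarrow x,\;0\nleftrightarrow\infty\bigr]+\Cov_{\Phi^{\wired}_{\beta}}\bigl(\mathds{1}_{\{0\leftrightarrow\infty\}},\,\mathds{1}_{\{x\leftrightarrow\infty\}}\bigr).
\end{equation*}
Both summands are non-negative (the first trivially, the second by FKG), so it is enough to show that each decays exponentially in $\norm{x}$.

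The first term forces $0$ and $x$ to belong to a common \emph{finite} FK cluster. I would treat it via a Pisztora-type coarse-graining: partition $\bbZ^{d}$ into cubes of side $L$ and declare a cube \emph{good} if (i) its FK configuration on an enlarged neighborhood displays the usual optimal connection to the unique infinite cluster and (ii) no edge of length exceeding $L/2$ is incident to the cube. For finite-range interactions, condition (i) holds with density tending to $1$ as $L\to\infty$ for every $\beta>\betac$, by Bodineau's theorem that the slab percolation threshold coincides with $\betac$ for FK-Ising. Under the hypothesis $J_{x}\leq\sfe^{-c\norm{x}}$, condition (ii) fails with probability at most $CL^{d}\sfe^{-cL/2}$, so the usual stochastic domination by supercritical Bernoulli site percolation on the renormalized lattice $(L\bbZ)^{d}$ survives the presence of infinite-range edges. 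A standard Peierls estimate on the dual contours of bad cubes then bounds the first term by $\sfe^{-c'\norm{x}/L}$.

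The second summand is an FK correlation between two ``connection to infinity'' events. Its exponential decay follows from the same coarse-graining: conditionally on the good-cube environment, $\{0\leftrightarrow\infty\}$ and $\{x\leftrightarrow\infty\}$ are essentially determined by disjoint local neighborhoods of $0$ and $x$ in the renormalized lattice, and decouple unless a connected chain of bad cubes joins them, an event of exponentially small probability.

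The main obstacle lies in establishing the analogue of the Pisztora--Bodineau slab-percolation theorem for the infinite-range model, uniformly in $\beta>\betac$. The key ingredients entering Bodineau's proof (sharpness of the phase transition for FK-Ising, uniqueness of the infinite cluster, block-level finite-energy estimates) should all remain available, but arbitrarily long edges require delicate surgery: such edges can \emph{a priori} connect distant regions and destroy the independence structure of the renormalization. The hypothesis $J_{x}\leq\sfe^{-c\norm{x}}$ should permit truncation at a scale proportional to $L$ with controlled error, but making this argument quantitative and robust as $\beta\downarrow\betac$---where the good-cube density degenerates---constitutes the central technical difficulty; in particular, the resulting lower bound on $\nu_{\beta}(s)$ must be expected to vanish as $\beta\downarrow\betac$, in agreement with the continuity of $\beta\mapsto\nu_{\beta}(s)$.
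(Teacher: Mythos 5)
This statement is a \emph{conjecture} in the paper, not a theorem: the authors explicitly leave it open and discuss why the obvious routes fail (the finite-range case was established only recently by Duminil-Copin, Goswami and Raoufi, and the natural analogue of the subcritical argument breaks because the double random-current measure is not known to be monotonic, so the OSSS inequality cannot be applied). So there is no proof in the paper to compare against, and you should be clear that what you have written is a strategy outline, not a proof.

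Your decomposition $\trn_{\beta}=\Phi^{\wired}_{\beta}[0\leftrightarrow x,\,0\nleftrightarrow\infty]+\Cov(\mathds{1}_{\{0\leftrightarrow\infty\}},\mathds{1}_{\{x\leftrightarrow\infty\}})$ is correct, and the coarse-graining idea is a reasonable thing to try for the first summand. However, there is a gap that goes beyond the one you flag. You assert that the covariance term ``follows from the same coarse-graining'' because the two connection-to-infinity events ``decouple unless a connected chain of bad cubes joins them.'' This is not established even in the \emph{finite-range} case. In Bernoulli percolation ($q=1$) that covariance vanishes identically, so the issue does not arise; for FK with $q>1$ the long-range correlations of the measure itself (not of the geometry) contribute, and showing that the conditioning on the good-block environment decorrelates $\{0\leftrightarrow\infty\}$ from $\{x\leftrightarrow\infty\}$ at an exponential rate amounts to a weak-mixing property of supercritical FK-Ising that is essentially equivalent to the conclusion you are trying to prove --- this circularity is precisely why the problem remained open until the random-current argument of Duminil-Copin--Goswami--Raoufi, which handles this term by a quite different mechanism (switching lemma, not renormalization). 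So even granting yourself a Pisztora--Bodineau theorem for infinite-range FK-Ising (which, as you acknowledge, does not yet exist and whose construction is itself a substantial open problem), your argument for the covariance term would not close.

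In short: the proposal identifies plausible ingredients but contains a step that is not known to work even in the finite-range setting, in addition to the missing infinite-range renormalization infrastructure you already noted. It should be presented as a possible line of attack with two open components rather than as a proof.
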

Understanding the behaviour of the truncated two-point function without an external field non pertubatively below critical temperature is challenging. The exponential decay of the two-point function in the finite-range Ising models was established only recently in~\cite{Duminil-Copin+Goswami+Raoufi-2019}. For $\beta<\beta_{c}$, the conclusion of Conjecture~\ref{conjecture:expo_decay} was established in~\cite{Aoun+Ott+Velenik-2022} using the random cluster representation of the Ising model and the OSSS inequality for monotonic measures (see~\cite{Duminil-Copin+Raoufi+Tassion-2017}). Since the double random-current is not known to be monotonic, one cannot use the same reasoning to prove Conjecture~\ref{conjecture:expo_decay}.

We also expect the same dichotomy of behaviour of the two-point function between $\beta<\betasat(s)$ and $\beta\in(\betasat(s),\betac)$ (see~\eqref{eq:1jump_behaviour} and~\eqref{eq:OZ_behaviour}) to happen below the critical temperature.
\begin{conjecture}
\begin{enumerate}
\item For $\beta>\betasat^{*}(s)$, there exists $C>0$ such that 
\begin{equation*}
\langle\sigma_{0};\sigma_{ns}\rangle_{\beta}=CJ_{ns}(1+\sfo_{n}(1)).
\end{equation*}
\item For $\beta\in (\betac,\betasat^{*}(s))$, there exists $C>0$ such that
\begin{equation*}
\langle\sigma_{0};\sigma_{ns}\rangle_{\beta}=Cn^{-\frac{d-1}{2}}\sfe^{-\nu_{\beta}(x)}(1+\sfo_{n}(1)).
\end{equation*}
\end{enumerate}
\end{conjecture}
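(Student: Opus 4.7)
The conclusion $\betasat^*(s)<\beta_0$ and the lower bound $C_-J_{ns}\leq\langle\sigma_0;\sigma_{ns}\rangle_\beta$ are essentially free: \eqref{ineq:lower_bound_truncated} gives the lower bound at every $\beta>0$, and once the matching upper bound $\langle\sigma_0;\sigma_{ns}\rangle_\beta\leq C_+J_{ns}=C_+\psi(ns)\sfe^{-n\rho(s)}$ is established for $\beta\geq\beta_0$, the subexponentiality of $\psi$ forces $\nu_\beta(s)=\rho(s)$ via $-\tfrac{1}{n}\log$, hence $\betasat^*(s)\leq\beta_0<\infty$. So the entire theorem reduces to proving the upper bound.

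I would attack the upper bound via the random-current identity~\eqref{eq:truncated_to_current}, which gives
$$\langle\sigma_0;\sigma_x\rangle_{\Lambda;\beta}\leq\p^{\emptyset,\lbrace 0,x\rbrace}_{\Lambda^{\g};\beta}\!\left[0\nleftrightarrow\mathfrak{g}\right].$$
On this event, since $\partial\n_2=\lbrace 0,x\rbrace$, the connected component $\mathcal{C}_0$ of $0$ in $\n_1+\n_2$ is a finite set containing $x$. I would decompose over $\mathcal{C}_0=C$ and bound each summand by a Peierls-plus-skeleton estimate. \emph{(Peierls)} The cluster being exactly $C$ forces every edge of the boundary $\partial C$ (edges between $C$ and $V\setminus C$ together with the ghost edges $\lbrace v,\mathfrak{g}\rbrace$ for $v\in C$) to vanish in both currents, so iterating~\eqref{ineq:useful_closing} on the two currents of the product measure produces a factor at most $\prod_{e\in\partial C}4\sfe^{-2\beta J_e}$. \emph{(Skeleton)} The source constraint $\partial\n_2=\lbrace 0,x\rbrace$ forces an $\n_2$-open self-avoiding path $\gamma=(0=v_0,\dots,v_k=x)$ inside $C$, selected deterministically, and \eqref{ineq:like_finite_energy} applied along $\gamma$ contributes at most $C^k\beta^k\prod_i J_{v_i,v_{i+1}}$.

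Reorganising the sum by first fixing $\gamma$ and then summing over the decoration $D=C\setminus\gamma$, the assumption $J_e>0$ for $\abs{e}=1$ supplies a uniform strictly positive Peierls surface cost per added decoration vertex, so for $\beta$ large the decoration sum collapses to an absolute constant. What remains is the bare skeleton sum, schematically $\sum_{k\geq 1}(C\beta)^k\sfe^{-c\beta(k-1)}(J^{*k})(x)$, where $(J^{*k})$ is the $k$-fold convolution of $J$. The $k=1$ term equals $C\beta J_{0,x}$, which is the target. For $k\geq 2$ I would invoke the hypothesis $\bbJ(t)<\infty$ via the tilting identity $\sum_y\sfe^{t\cdot y}(J^{*k})(y)=\bbJ(t)^k$ (using that $t$ is dual to $s$, so $t\cdot (ns)=\rho(ns)$), reducing the tail to a geometric series in $\beta\bbJ(t)\sfe^{-c\beta}$, which vanishes as $\beta\to\infty$.

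The crux is that the bare tilt estimate $(J^{*k})(x)\leq\sfe^{-\rho(x)}\bbJ(t)^k$ captures only the correct exponential rate but loses the subexponential prefactor $\psi(x)$; when $\psi(x)\to 0$ this loss is not harmless. The key technical step, where the real work of the proof lies, is thus establishing a sharper pointwise \emph{one-jump dominance} estimate asserting that, once combined with the Peierls factor $\sfe^{-c\beta(k-1)}$, the convolutions $J^{*k}$ for $k\geq 2$ effectively concentrate on configurations realizing the displacement $x$ through a single long step, so that
$$\sum_{k\geq 2}(C\beta)^k\sfe^{-c\beta(k-1)}(J^{*k})(x)\;\leq\;\varepsilon(\beta)\,J_{0,x}$$
with $\varepsilon(\beta)\to 0$ as $\beta\to\infty$. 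This mirrors the one-jump phenomenon identified in~\cite{Aoun+Ioffe+Ott+Velenik-CMP-2021,Aoun+Ott+Velenik-2022} in the high-temperature/FK setting and must be transplanted to the double random-current setup.
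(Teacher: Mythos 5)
You have not addressed the statement you were asked to prove. The statement is a \emph{conjecture}, which the paper explicitly leaves open; what you sketch is instead essentially the argument for Theorem~\ref{thm:saturation_truncated}, namely the two-sided bound $C_-J_{ns}\leq\langle\sigma_0;\sigma_{ns}\rangle_\beta\leq C_+J_{ns}$ valid for $\beta$ sufficiently large, obtained from the random-current identity~\eqref{eq:truncated_to_current}, a Peierls surface estimate via~\eqref{ineq:useful_closing}, a skeleton-path extraction controlled by~\eqref{ineq:like_finite_energy}, and a comparison of the resulting weighted self-avoiding-walk sum with the killed-random-walk Green function (the paper's Lemma~\ref{lemma:saturation_comparaison_KRW} together with Theorem~\ref{thm:KRW_saturation}). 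Your opening sentence even cites ``the conclusion $\betasat^{*}(s)<\beta_0$,'' which is a conclusion of that theorem, not of the conjecture at hand.

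The Conjecture asks for strictly more, and your argument falls short of it on three counts. First, part~(1) demands the one-jump asymptotics for \emph{all} $\beta>\betasat^{*}(s)$, whereas a Peierls estimate only activates once $\beta$ is large enough for the surface cost $\sfe^{-c\beta}$ to dominate the entropy, leaving the window $(\betasat^{*}(s),\beta_0)$ uncovered; in the high-temperature analogue, closing the corresponding window in~\cite{Aoun+Ott+Velenik-2022} required differential inequalities in the spirit of~\cite{Duminil-Copin+Raoufi+Tassion-2017,Hutchcroft-2020}, and the paper notes, immediately before this Conjecture, that the double random current is not known to be monotonic, which blocks the analogous route here. Second, even on the range of $\beta$ where your bound applies, you obtain two distinct constants $C_-$ and $C_+$, not the single-constant asymptotic $\langle\sigma_0;\sigma_{ns}\rangle_\beta = CJ_{ns}(1+\sfo_n(1))$; the ``one-jump dominance'' step that would promote a bounded ratio to a genuine asymptotic is named in your sketch but not carried out, and it is precisely where the difficulty lies. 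Third, part~(2) of the Conjecture --- Ornstein--Zernike asymptotics for $\beta\in(\betac,\betasat^{*}(s))$ --- is not addressed at all and is of a fundamentally different nature, requiring an adaptation to the infinite-range, non-saturated regime of the truncated-correlation OZ analysis of~\cite{Bricmont+Frohlich-1985b,Campanino+Gianfelice-2015}.
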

\subsection{Organisation of the paper}
In Section~\ref{section:phi(S)_argument}, we will prove Theorem~\ref{lemma:transformée_infini_point_satuation}, Theorem~\ref{thm:OZ_sur_Z} and Lemma~\ref{lemma:local_saturation} using the so-called $\varphi(S)$ argument. In Section~\ref{section:saturation}, we will prove Theorem~\ref{thm:saturation_truncated} by comparing directly the random-current representation of the truncated two-point function to the Green function associated to a well-chosen killed random walk. Note that different parts are essentially independent. 

\section{$\varphi(S)$ argument}\label{section:phi(S)_argument}
In this section, we are going to prove Theorem~\ref{lemma:transformée_infini_point_satuation}, Lemma~\ref{lemma:local_saturation} and Theorem~\ref{thm:OZ_sur_Z}. Generalizing what has been done in~\cite{Duminil-Copin+Tassion-2016}, given a finite subset \(S\) containing \(0\), $t\in\partial\Wulff$ and $\beta>0$, let us define 
\[
	\varphi_{\beta}(S,t) = \beta\sum_{x\in S} \sum_{y\notin S} \sfe^{t\cdot x} \Phi_\beta(0\xleftrightarrow{S}x) J_{x,y} \sfe^{t\cdot (y-x)}.
\]
Moreover, we define
\begin{equation*}
	\tilde{\beta}_{\mathrm{sat}}(t) = \sup\setof{\beta\geq 0}{\text{there exists a finite \(S\) containing \(0\) such that \(\varphi_{\beta}(S,t)<1\)}}.
\end{equation*}
We will need the following lemma:
\begin{lemma}\label{lemma:phi(S)_argument}
Fix $t\in\partial\Wulff$ and $\beta>0$. Assume that $\betahat(t)>0$.
\begin{enumerate}
\item If there exists a finite subset $S\ni 0$ such that $\varphi_{\beta}(S,t)<1$, then there exists $C=C(S)>0$ such that 
\begin{equation*}
\bbG(t)\le \dfrac{C}{1-\varphi_{\beta}(S,t)}<\infty.
\end{equation*}
\item There exist $c>0$ and a strictly increasing sequence $(n_{k})$ such that 
\begin{equation*}
c\sum_{k=1}^{l}\varphi_{\beta}(\Lambda_{n_{l}},t)\leq\sum_{x\in\Lambda_{n_{l}}} \sfe^{t\cdot x} \Phi_{\beta}(0 \xleftrightarrow{\Lambda_{n_{l}}} x).
\end{equation*}
If $\psi(x)=\sfO(\rho(x)^{-d-1-\varepsilon})$ for some $\varepsilon>0$, one can take $n_{k}=k$.
\end{enumerate}
In particular, $\tilde{\beta}_{\mathrm{sat}}(t)=\betahat(t)$.
\end{lemma}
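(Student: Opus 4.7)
My plan is to prove the two items of the lemma separately and then combine them to obtain $\tilde\beta_{\mathrm{sat}}(t)=\betahat(t)$.

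For (1), I adapt the classical Simon--Lieb iteration to the tilted Green function. Fix a finite $S\ni 0$ with $\varphi_\beta(S,t)<1$ and a box $\Lambda_N\supset S$. Applying~\eqref{ineq:Simon-Lieb} to $\Phi_\beta(0\xleftrightarrow{\Lambda_N} z)$ for each $z\in\Lambda_N\setminus S$, multiplying by $\sfe^{t\cdot z}$, splitting $\sfe^{t\cdot z}=\sfe^{t\cdot x}\sfe^{t\cdot(y-x)}\sfe^{t\cdot(z-y)}$, and using the monotonicity $\Phi_\beta(y\xleftrightarrow{\Lambda_N} z)\le\Phi_\beta(y\leftrightarrow z)$ together with translation invariance of $\Phi_\beta$ give
\[
\bbG^{\Lambda_N}_\beta(t):=\sum_{z\in\Lambda_N}\sfe^{t\cdot z}\Phi_\beta(0\xleftrightarrow{\Lambda_N} z)\le C(S)+\varphi_\beta(S,t)\,\bbG_\beta(t),\qquad C(S):=\sum_{z\in S}\sfe^{t\cdot z}.
\]
To avoid the circular appearance of $\bbG_\beta(t)$ on the right, I iterate this Simon--Lieb step $k$ times inside $\Lambda_N$: each iteration yields a factor $\varphi_\beta(S,t)<1$, while $\bbG^{\Lambda_N}_\beta(t)$ is a priori trivially finite, so the residual vanishes as $k\to\infty$. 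Summing the resulting geometric series and letting $N\to\infty$ via monotone convergence yields $\bbG_\beta(t)\le C(S)/(1-\varphi_\beta(S,t))$.

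For (2), I interchange summations and invoke the monotonicity $\Phi_\beta(0\xleftrightarrow{\Lambda_{n_k}} x)\le\Phi_\beta(0\xleftrightarrow{\Lambda_{n_l}} x)$ for $k\le l$, which yields
\[
\sum_{k=1}^{l}\varphi_\beta(\Lambda_{n_k},t)\le \beta\sum_{x\in\Lambda_{n_l}}\sfe^{t\cdot x}\Phi_\beta(0\xleftrightarrow{\Lambda_{n_l}} x)\,M_l(x),
\]
where, after the substitution $z=y-x$, $M_l(x)=\sum_{z}\psi(z)\sfe^{t\cdot z-\rho(z)}\,\#\bsetof{k\le l}{\|x\|_\infty\le n_k<\|x+z\|_\infty}$. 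The task therefore reduces to choosing $(n_k)$ so that $\sup_{x,l}M_l(x)<\infty$. When $\psi(x)=\sfO(\rho(x)^{-d-1-\varepsilon})$, the choice $n_k=k$ works: the cardinality is bounded by $\|z\|_\infty$ and $\sum_z\psi(z)\|z\|_\infty<\infty$. For general $\psi$, the hypothesis $\betahat(t)>0$ together with the finite-energy lower bound $\Phi_\beta(0\leftrightarrow z)\ge c\beta J_{0,z}$ for small $\beta$ forces $\bbJ(t)<\infty$. A standard Abel-type device then produces $f\colon\bbN\to\bbN$ with $f(r)\to\infty$ and $\sum_z\psi(z)\sfe^{t\cdot z-\rho(z)}f(\|z\|_\infty)<\infty$; I construct $(n_k)$ inductively with gaps large enough that every interval $[a,a+r]$ meets at most $f(r)$ of the $n_k$, yielding $M_l(x)\le\sum_z\psi(z)\sfe^{t\cdot z-\rho(z)}f(\|z\|_\infty)<\infty$ uniformly in $x$ and $l$.

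The combination is then straightforward. Statement (1) gives $\tilde\beta_{\mathrm{sat}}(t)\le\betahat(t)$ directly. Conversely, for any $\beta<\betahat(t)$ one has $\bbG_\beta(t)<\infty$, and since the right-hand side in (2) is bounded by $\bbG_\beta(t)$ uniformly in $l$, the series $\sum_k\varphi_\beta(\Lambda_{n_k},t)$ converges; in particular some $\varphi_\beta(\Lambda_{n_k},t)<1$, whence $\beta\le\tilde\beta_{\mathrm{sat}}(t)$, and letting $\beta\uparrow\betahat(t)$ concludes. The principal technical obstacle is the general-$\psi$ case of (2): one must convert the qualitative integrability $\bbJ(t)<\infty$ into a quantitative sparsity condition on $(n_k)$ via the slow-growth multiplier $f$, and then verify that the inductively-built sequence indeed satisfies the required uniform cardinality bound.
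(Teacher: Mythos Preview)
Your proposal is correct and follows the same overall strategy as the paper: Simon--Lieb for part~(1), swapping the order of summation for part~(2), and then combining the two for $\tilde\beta_{\mathrm{sat}}(t)=\betahat(t)$. The technical execution differs in two small ways. For~(1), the paper sidesteps your iteration by introducing $\tilde\chi(\Lambda,t,\beta)=\max_{u\in\Lambda}\sum_{v\in\Lambda}\sfe^{t\cdot(v-u)}\Phi_\beta(u\xleftrightarrow{\Lambda}v)$; a single application of Simon--Lieb then yields $\tilde\chi\le C(S)+\varphi_\beta(S,t)\,\tilde\chi$, and since $\tilde\chi$ is trivially finite for finite $\Lambda$ one rearranges directly. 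This is cleaner than iterating, which in any case implicitly needs the same max-over-basepoint device once the starting point moves to~$y$. For~(2), the paper simply notes that $\bbJ(t)<\infty$ forces the tails $\sum_{y\notin\Lambda_L}\sfe^{t\cdot(y-x)}J_{x,y}$ to vanish and then picks $(n_k)$ so that the resulting series converges, without spelling out the uniformity in~$x$; your route via the Abel multiplier $f$ and the interval-cardinality condition on $(n_k)$ makes that uniformity explicit and is in that respect more careful.
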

Before proving Lemma~\ref{lemma:phi(S)_argument}, let us see how it implies Theorem~\ref{lemma:transformée_infini_point_satuation} and Lemma~\ref{lemma:local_saturation}.
\begin{proof}[Proof of Theorem~\ref{lemma:transformée_infini_point_satuation}]
 Since \(\varphi_{\beta}(S,t)\) is a continuous function in \(\beta\) and \([0,1)\) is open in \([0,\infty)\), it follows that at \(\tilde{\beta}_{\mathrm{sat}}(t)\), for every \(\Lambda \ni 0\), we have \(\varphi_{\tilde{\beta}_{\mathrm{sat}}(t)}(\Lambda,t) \geq 1\). This in turn implies, by the second part of Lemma~\ref{lemma:phi(S)_argument}, the conclusion of Theorem~\ref{lemma:transformée_infini_point_satuation}.
\end{proof}
\begin{proof}[Proof of Theorem~\ref{thm:OZ_sur_Z}]
We will only show the result for $x>0$, since the result follows for $x$ negative by symmetry. The right inequality follows directly from~\eqref{ineq:subaddivitiy} and $\nu_{\betasat(1)}(1)=\rho(1)$. For the left inequality, remark that since we assumed that $\alpha>2$, it follows from Corollary~\ref{corollary:laplace_infinite_beta_sat} that for every $x\geq 1$:  
\begin{equation*}
\sum_{k=1}^{x}\sfe^{k}\Phi_{\betasat(1)}(0\leftrightarrow k)\geq Cx -\sum_{k=0}^{x}\sfe^{-k}\Phi_{\betasat(1)}(0\leftrightarrow -k).
\end{equation*}
Since $\sfe^{k}\Phi_{\betasat(1)}(0\leftrightarrow k)\in [0,1]$ by~\eqref{ineq:subaddivitiy}, this implies that there exists $R>0$ and $c>0$ such that for any $m\in\mathbb{N}$, there exists $k\in\lbrace m,\dots,m+R\rbrace$ such that one has 
\begin{equation*}
\sfe^{k}\Phi_{\betasat(1)}(0\leftrightarrow k)\geq c.
\end{equation*}
The result then follows by the finite energy~\eqref{eq:finite_energy} and FKG for every $x\in\mathbb{N}$. 
\end{proof}
\begin{proof}[Proof of Lemma~\ref{lemma:local_saturation}]
In order to prove Lemma~\ref{lemma:local_saturation}, note that, by assumption, \(\beta<\betasat(s) = \hat{\beta}_{\mathrm{sat}}(t) = \tilde{\beta}_{\mathrm{sat}}(t)\), where the last equality is given by Lemma~\ref{lemma:phi(S)_argument}. It follows that there exists a finite \(S\) containing \(0\) such that \(\varphi_{\beta}(S,t) < 1\). Since \(\bbJ\) is locally finite (around \(t\)) and \(S\) is finite, it follows by continuity that \(\varphi_{\beta}(S,h) < 1\) for \(h\in B_{\varepsilon'}(t)\cap\calW_{\rho}\). This implies that \(\beta<\betasat(s')\) for \(s'\) in some small neighborhood around \(s\) since $\Wulff$ is locally strictly convex and \(\betasat = \tilde{\beta}_{\mathrm{sat}}\) locally, which is the desired result.
\end{proof}
\begin{proof}[Proof of Lemma~\ref{lemma:phi(S)_argument}]
We follow here ideas developed in~\cite{Duminil-Copin+Tassion-2016}.
First, suppose that there exists \(S\) containing \(0\) such that \(\varphi_{\beta}(S,t)<1\). Let \(\Lambda\subset\Zd\) and let
\[
	\tilde{\chi}(\Lambda, t, \beta) = \max\Bsetof{\sum_{v\in\Lambda} \sfe^{t\cdot (v-u)} \Phi_\beta(u \xleftrightarrow{\Lambda} v)}{u\in\Lambda}.
\]
Let us fix \(u\in\Lambda\) and denote by \(S_u\) the translation of \(S\) by \(u\). Fix \(v\in\Lambda\setminus S_u\). If \(u\) is connected to \(v\), then there exists \(x\in S_u\) and \(y\notin S_u\) such that \(u\) is connected to \(x\) in \(S\), \(\{x,y\}\) is open and \(y\) is connected to \(v\). Using the union bound and  the Simon--Lieb inequality~\eqref{ineq:Simon-Lieb}, we get 
\begin{equation*}
	\sfe^{t\cdot (v-u)} \Phi_\beta(u\xleftrightarrow{\Lambda} v)
	\leq \\
	\sum_{x\in S_u} \sum_{y\notin S_u} \sfe^{t\cdot (x-u)} \Phi_\beta(u \xleftrightarrow{S_u} x) \sfe^{t\cdot (y-x)} \beta J_{x,y} \sfe^{t\cdot (v-y)} \Phi(y\xleftrightarrow{\Lambda} v).
	\end{equation*}
Summing over \(v\in\Lambda\setminus S_u\), we get 
\[
	\sum_{v\in\Lambda\setminus S_u} \sfe^{t\cdot (v-u)} \Phi_\beta(u \xleftrightarrow{\Lambda}v ) \leq \varphi_{\beta}(S,t) \tilde{\chi}(\Lambda, t, \beta),
\]
where we used the invariance under translations. Since \(S\) is finite, there exists $C:=C(S)>0$ such that 
\[
	\sum_{v\in\Lambda} \sfe^{t\cdot (v-u)} \Phi_\beta(u \xleftrightarrow{\Lambda} v)
	\leq C + \varphi_{\beta}(S,t) \tilde{\chi}(\Lambda, t, \beta).
\]
Now, we can optimize over \(u\) to get 
\[
	\tilde{\chi}(\Lambda,t,\beta) \leq C + \varphi_{\beta}(S,t) \tilde{\chi}(\Lambda,t,\beta),
\]
which can be rewritten as
\[
	\tilde{\chi}(\Lambda,t,\beta) \leq \frac{C}{1-\varphi_{\beta}(S,t)}.
\]
Taking the limit \(\Lambda\uparrow\Zd\), we obtain
\[
	\bbG_{\beta}(t) \leq \frac{C}{1-\varphi_{\beta}(S,t)} < \infty,
\]
where the last inequality follows from the assumption \(\varphi_{\beta}(S,t) < 1\).

Let us now turn to the second point. For any strictly increasing sequence $(n_{k})$, one has 
\begin{align*}
	\sum_{k=1}^{l} \varphi_{\beta}(\Lambda_{n_{k}},t)
	&=
	\sum_{k=1}^{l} \sum_{x\in\Lambda_{n_{k}}} \sum_{y\notin \Lambda_{n_{k}}} \sfe^{t\cdot x} \Phi_\beta(0 \xleftrightarrow{\Lambda_{n_{k}}} x) \sfe^{t\cdot (y-x)} \beta J_{x,y} \\
	&\leq 
	\sum_{x\in\Lambda_{n_{l}}} \sfe^{t\cdot x}\Phi_\beta(0 \xleftrightarrow{\Lambda_{n_{l}}} x) 
	\sum_{k=1}^{l} \sum_{y\notin \Lambda_{n_{k}}} \sfe^{t\cdot (y-x)} \beta J_{x,y} \mathds{1}_{x\in\Lambda_{n_{k}}}.
\end{align*}
Given \(x\in\Zd\), let us prove that the double sum over \(k\) and \(y\) is finite. The sum over \(y\) is bounded by \(\bbJ(t)$ which is finite. Indeed, for $\beta=\betahat(t)/2 >0$ by hypothesis, one has by finite energy
\begin{equation*}
\bbG_{\beta}(t)\geq C_{\beta}\bbJ(t).
\end{equation*}
This implies that 
\begin{equation*}
\lim_{L\rightarrow\infty}\sum_{y\notin\Lambda_{L}}\sfe^{t\cdot (y-x)}J_{xy}=0.
\end{equation*}
 We can thus choose \(n_{k}\) such that 
 \begin{equation*}
\sum_{k\geq 1} \sum_{y\notin\Lambda_{n_{k}}} \sfe^{t\cdot (y-x)}
	 J_{x,y}=C<\infty.
	 \end{equation*}
Moreover, if $\psi(x)=\sfO(\rho(x)^{-d-1-\varepsilon})$ for some $\varepsilon>0$, this last sum is finite if one chooses $n_{k}=k$ since $t\cdot y-\rho(y)\leq 0$ for any $y\in\bbZ^{d}$ by definition of the dual vector $t$. Therefore, we get 
\[
	\sum_{k=1}^{l} \varphi_{\beta}(\Lambda_{n_{k}},t) \leq C\sum_{x\in\Lambda_{n_{l}}}\sfe^{t\cdot x} \Phi_\beta(0 \xleftrightarrow{\Lambda_{n_{l}}} x),
\]
which proves the desired identity. 
\end{proof}
\section{The existence of a saturation transition at low temperatures}
\label{section:saturation}
In this section, we are going to prove Theorem~\ref{thm:saturation_truncated}. Through this section, we are going to assume that $J_{e}>0$ for any edge of length 1. By rotational invariance, we can assume without loss of generality that $J_{e}=1$ for every edge of length $1$.  We start by making a brief summary of the result proved in~\cite{Aoun+Ioffe+Ott+Velenik-CMP-2021} that we rely on.
Given $\lambda>0$, we define the Green function of the killed random walk model by
\begin{equation*}
G_{\lambda}^{\mathrm{KRW}}(x,y)=
\sum_{\gamma: x\rightarrow y}\prod_{i=1}^{\abs{\gamma}}\lambda J_{\gamma_{i-1},\gamma_{i}},
\end{equation*}
where the sum is over edge self-avoiding paths from $x$ and $y$. 
We will need the following result proved in~\cite{Aoun+Ioffe+Ott+Velenik-CMP-2021}.
\begin{theorem}\label{thm:KRW_saturation}
Fix $s\in\bbS^{d-1}$. If there exists a dual vector $t$ to $s$ such that $\bbJ(t)<\infty$, then there exists $\lambda_{0}$, such that for every $\lambda<\lambda_{0}$, there exists $C:=C(\lambda)>0$ such that 
\begin{equation*}
G^{\mathrm{KRW}}_{\lambda}(0,x)\leq CJ_{x}.
\end{equation*}
\end{theorem}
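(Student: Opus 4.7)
The plan is to reduce the bound, via exponential tilting by the dual vector $t$, to a uniform single-big-jump estimate for a random walk with summable step distribution; this is the strategy carried out in~\cite{Aoun+Ioffe+Ott+Velenik-CMP-2021}. First I would set
$$\tilde{J}_{x,y}:=\sfe^{t\cdot(y-x)}J_{x,y}=\psi(y-x)\sfe^{t\cdot(y-x)-\rho(y-x)},\qquad \tilde{G}_{\lambda}(0,x):=\sfe^{t\cdot x}G^{\mathrm{KRW}}_{\lambda}(0,x).$$
Duality of $t$ to $s$ gives $t\cdot z\leq\rho(z)$ for every $z\in\bbR^{d}$, so the exponential factor in $\tilde{J}$ lies in $(0,1]$, and the hypothesis reads $\sum_{y}\tilde{J}_{0,y}=\bbJ(t)<\infty$. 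Multiplying by $\sfe^{t\cdot x}$, the desired bound $G^{\mathrm{KRW}}_{\lambda}(0,x)\leq CJ_{x}$ becomes $\tilde{G}_{\lambda}(0,x)\leq C\tilde{J}_{0,x}$.

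Next I would set $\lambda_{0}:=1/\bbJ(t)$ and fix $\lambda<\lambda_{0}$. Dropping the edge self-avoidance constraint in $G^{\mathrm{KRW}}_{\lambda}$ only enlarges the sum, so
$$\tilde{G}_{\lambda}(0,x)\leq\sum_{n\geq 1}(\lambda\tilde{J})^{*n}(0,x),$$
and in particular the tilted susceptibility $\sum_{x}\tilde{G}_{\lambda}(0,x)$ is bounded by the geometric sum $\lambda\bbJ(t)/(1-\lambda\bbJ(t))<\infty$. It then suffices to establish a uniform single-big-jump estimate of the form
$$(\lambda\tilde{J})^{*n}(0,x)\leq C\,n\,(\lambda\bbJ(t))^{n-1}\,\lambda\tilde{J}_{0,x},\qquad n\geq 1,\;x\neq 0,$$
since summing in $n$ converts the geometric-times-linear tail into the required $C'\tilde{J}_{0,x}$.

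To prove this estimate I would split any decomposition $z_{1}+\dots+z_{n}=x$ of increments according to the index $k$ with the largest $\rho$-length. The triangle inequality forces $\rho(z_{k})\geq\rho(x)/n$, and the subexponentiality of $\psi$ together with the fact that $z_{k}$ dominates the other increments is meant to allow one to replace $\tilde{J}_{0,z_{k}}$ by a constant multiple of $\tilde{J}_{0,x}$; integrating the remaining $n-1$ increments freely against $\bbJ(t)$ produces the factor $(\lambda\bbJ(t))^{n-1}$, and the union bound over the choice of $k$ gives the factor $n$. The hard part will be making the comparison $\tilde{J}_{0,z_{k}}\leq C\tilde{J}_{0,x}$ uniform in $n$ and in $x$ for a general subexponential correction $\psi$ satisfying only $\log\psi(x)=\sfo(\rho(x))$: this quantitative form of the one-big-jump principle is the technical crux of the argument and is the step in which the precise assumption on $\psi$ (and thus the finiteness of $\bbJ(t)$) really enters, exactly as in the corresponding argument of~\cite{Aoun+Ioffe+Ott+Velenik-CMP-2021}.
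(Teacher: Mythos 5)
The paper does not prove this statement: it is introduced with the sentence ``We will need the following result proved in~\cite{Aoun+Ioffe+Ott+Velenik-CMP-2021}'', so there is no in-paper argument to compare your proposal against. You are in effect reconstructing the argument of that reference.

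As a reconstruction, the first two steps are sound and are indeed how one would begin: the tilting $\tilde J_{x,y}=\sfe^{t\cdot(y-x)}J_{x,y}$, the observation that duality gives $\tilde J\in(0,\psi]$ with $\sum_y\tilde J_{0,y}=\bbJ(t)$, and the reduction to a Kesten-type bound $\sum_{n\geq 1}(\lambda\tilde J)^{*n}(0,x)\leq C\tilde J_{0,x}$ for $\lambda\bbJ(t)<1$ after dropping self-avoidance. The gap is in the mechanism you propose for the single-big-jump step. You take $k$ to be the index of the increment with the largest $\rho$-length, note $\rho(z_k)\geq\rho(x)/n$, and then want $\tilde J_{0,z_k}\leq C\tilde J_{0,x}$ with $C$ uniform in $n$. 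This inequality is oriented the wrong way: $\tilde J$ is (in the relevant directions) a decreasing function of $\rho$-length, so an increment $z_k$ with $\rho(z_k)$ as small as $\rho(x)/n$ generically has a \emph{much larger} kernel value than $x$ does. For $\psi(x)=\rho(x)^{-\alpha}$, say, the ratio can be of order $n^\alpha$, so the constant cannot be taken uniform in $n$; and for $x$ away from the direction $s$, the surcharge factor $\sfe^{\rho(x)-t\cdot x}$ can make the comparison fail altogether. What one actually needs for a single-big-jump argument is the much stronger information that the one dominant increment $z_k$ differs from $x$ by an $O(1)$ amount in $\rho$-length (i.e., the remaining $n-1$ increments sum to something of bounded $\rho$-length), together with a verification that the tilted kernel $\tilde J$ is subexponential in the appropriate sense. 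That is not deducible from ``$k$ maximises $\rho(z_k)$'' and the assumption $\bbJ(t)<\infty$ alone; the standard way to close it is an induction on $n$ in the style of Kesten's bound, and the subexponentiality of $\tilde J$ is a genuine regularity hypothesis which must be checked for the class of $\psi$ considered. You flag this step as the ``technical crux'', which is correct, but the splitting you propose does not reduce it to something tractable, so as written the sketch does not establish the theorem.
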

The next result bounds the truncated two-point function of the Ising model by the Green functions introduced above
\begin{lemma}\label{lemma:saturation_comparaison_KRW}
There exists $\beta_{0}$ such that for any $\beta>\beta_{0}$, there exists $C_{+}>0$ such that 
\begin{equation*}
\langle\sigma_{0};\sigma_{x}\rangle_{\beta}\leq C_{+}G_{\lambda(\beta)}^{\mathrm{KRW}}(0,x),
\end{equation*}
where $\lim_{\beta\rightarrow\infty}\lambda(\beta)=0$.
\end{lemma}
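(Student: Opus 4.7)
The plan is to combine the random-current representation with a path expansion that dominates the truncated two-point function by a sum of the same type as the KRW Green's function. By formula~\eqref{eq:truncated_to_current} and the trivial bound $\langle \sigma_0 \sigma_x \rangle_{\Lambda;\beta} \leq 1$, we have
\begin{equation*}
\langle \sigma_0; \sigma_x \rangle_{\Lambda;\beta} \leq \p^{\emptyset,\lbrace 0,x\rbrace}_{\Lambda^{\g};\beta}[0 \nleftrightarrow \g],
\end{equation*}
so it suffices to control the right-hand side uniformly in $\Lambda$ and then pass to the limit $\Lambda \uparrow \bbZ^d$.

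Since $\partial \n_2 = \lbrace 0, x\rbrace$, the percolation configuration of $\n_2$ always contains an edge self-avoiding path from $0$ to $x$. I extract a canonical such path $\gamma(\n_2)$ (say, the lexicographically smallest) and decompose
\begin{equation*}
\p^{\emptyset,\lbrace 0,x\rbrace}[0 \nleftrightarrow \g] = \sum_\gamma \p^{\emptyset,\lbrace 0,x\rbrace}\bigl[\gamma(\n_2) = \gamma,\ 0 \nleftrightarrow \g\bigr],
\end{equation*}
the sum ranging over edge self-avoiding paths $\gamma = (\gamma_0, \ldots, \gamma_n)$ from $0$ to $x$ in $\bbZ^d$.

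The heart of the argument is a per-path estimate
\begin{equation*}
\p^{\emptyset,\lbrace 0,x\rbrace}\bigl[\gamma(\n_2) = \gamma,\ 0 \nleftrightarrow \g\bigr] \leq C \prod_{i=1}^n \lambda(\beta)\, J_{\gamma_{i-1},\gamma_i},
\end{equation*}
whose summation over $\gamma$ reproduces $C\, G^{\mathrm{KRW}}_{\lambda(\beta)}(0, x)$. The factor $\beta^n \prod_i J_{\gamma_{i-1},\gamma_i}$ is provided by~\eqref{ineq:like_finite_energy} applied to $\n_2$ to force the edges of $\gamma$ to be open. The missing factor $\sfe^{-c\beta n}$, producing $\lambda(\beta) = \beta\, \sfe^{-c\beta}$ (so that $\lim_{\beta \to \infty} \lambda(\beta) = 0$), comes from a Peierls-type estimate for $\lbrace 0 \nleftrightarrow \g\rbrace$: the cluster $\calC_0$ of $0$ in $\n_1 + \n_2$ contains $\gamma$ and must be separated from $\g$ by a surface of nearest-neighbor edges closed in both currents, each closure contributing $\sfe^{-\beta}$ via~\eqref{ineq:useful_closing} (recall that $J_e=1$ for edges of length one, by rotational invariance). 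Summing over admissible clusters $C \supseteq \gamma$ weighted by these closure costs, and using the standard entropy bound $\sfe^{c'|\partial C|}$ on the number of clusters of a given perimeter containing $\gamma$, the sum is dominated by $\sfe^{-c\beta n}$ for $\beta$ large enough.

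The main obstacle is extracting a genuine linear-in-$n$ Peierls factor uniformly over $\gamma$: an edge self-avoiding path may visit only $\sfo(n)$ distinct vertices (e.g.\ a dense Hamiltonian-type path inside a small ball), so the standard isoperimetric inequality on the set of visited vertices only yields a sub-linear lower bound on the nearest-neighbor perimeter. This is compensated by the fact that long-range jumps are already heavily penalized through the exponential factor in $J_e = \psi(e)\sfe^{-\rho(e)}$, whereas short nearest-neighbor jumps do yield a genuinely linear Peierls boundary in $\bbZ^d$ (where $d\geq 2$ is essential, since in $d=1$ no such Peierls estimate is available, consistently with $\betac=\infty$ there). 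A careful splitting of the sum over $\gamma$ according to the mixture of short and long steps, together with the combinatorial control of clusters containing $\gamma$ with a prescribed perimeter, constitutes the technical crux of the proof.
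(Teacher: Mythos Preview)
Your overall strategy---random-current representation, extraction of a canonical self-avoiding path, a per-path bound combining~\eqref{ineq:like_finite_energy} for the open edges with a Peierls factor coming from~\eqref{ineq:useful_closing}---is exactly the architecture of the paper's proof. The genuine gap lies in the step you yourself flag as the ``technical crux'', and your proposed resolution does not work as written.

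The problem is your choice of canonical path. If $\gamma(\n_2)$ is the lexicographically smallest self-avoiding path, nothing prevents it from being a space-filling (Hamiltonian-type) path inside a small nearest-neighbor component: $n$ nearest-neighbor steps with dual boundary only of order $n^{(d-1)/d}$. Your sentence ``short nearest-neighbor jumps do yield a genuinely linear Peierls boundary'' directly contradicts the counterexample you gave two lines earlier, and no ``splitting into short and long steps'' repairs this: the difficulty is entirely within the short-step part. The long-range factors $J_e$ carry their own exponential weight, yes, but they contribute nothing towards the missing $\sfe^{-c\beta}$ on the nearest-neighbor steps.

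The paper fixes this not by a cleverer estimate on a fixed path, but by choosing the path differently. It first decomposes the cluster $\calC_{0,x}$ into its nearest-neighbor connected components $[\,\cdot\,]$, then builds $\gamma$ as an alternation of (i) long-range jumps between successive components and (ii) \emph{shortest} basic paths $\alpha_k$ inside each component. Minimality of $\alpha_k$ is exactly what guarantees $|\alpha_k|$ is controlled by the size of the dual boundary $\partial^{\ast}[\gamma_{r_k}]$ (or by an auxiliary dual path $\alpha_k^{\ast}$ with a positive fraction of closed primal edges), so that the Peierls cost is genuinely $\sfe^{-c\beta|\alpha_k|}$ for each piece. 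Summing the $|\alpha_k|$ recovers a factor linear in $|\gamma|$. Replacing ``lexicographically smallest'' by this shortest-within-component construction is not a cosmetic change: it is the idea that makes the linear Peierls bound true.
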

Before proving Lemma~\ref{lemma:saturation_comparaison_KRW}, let us show how it implies Theorem~\ref{thm:saturation_truncated}.
\begin{proof}[Proof of Theorem~\ref{thm:saturation_truncated}]
On the one hand, the lower bound in Theorem~\ref{thm:saturation_truncated} follows directly from~\eqref{ineq:lower_bound_truncated} for any $\beta>0$. Fix $\beta_{1}$ such that $\lambda(\beta)<\lambda_{0}$ for any $\beta>\beta_{1}$. Let $\beta_{2}=\max\lbrace \beta_{0},\beta_{1}\rbrace$. Then, for any $\beta>\beta_{2}$
\begin{equation*}
\langle\sigma_{0};\sigma_{x}\rangle_{\beta}\leq C_{+}G_{\lambda(\beta)}^{\mathrm{KRW}}(0,x)\leq cJ_{x},
\end{equation*}
where we used Lemma~\ref{lemma:saturation_comparaison_KRW} in the first inequality and Theorem~\ref{thm:KRW_saturation} in the second inequality. This gives the desired result.
\end{proof}
\noindent
\textit{Heuristic proof of Lemma \ref{lemma:saturation_comparaison_KRW}}\hphantom{,}
Thanks to~\eqref{eq:truncated_to_current}, we need to compare $\p^{\emptyset,\lbrace 0,x\rbrace}_{\Lambda^{\g},\beta}[0\nleftrightarrow \mathfrak{g}]$ to a Green function of a killed random walk. Recall that one has $J_{x\mathfrak{g}}=\sum_{y\in\Lambda^{c}}J_{xy}$. Therefore, thanks to~\eqref{ineq:useful_closing}, most of the points $y$ close to $\partial\Lambda$ will not be connected to $\partial\Lambda$, which allows us to replace the event $\lbrace 0\nleftrightarrow\mathfrak{g}\rbrace$ with the event $\lbrace 0\nleftrightarrow\partial\Lambda\rbrace$. This term can be estimated using Peierls-like argument: we will decompose $\calC_{0,x}$ into $C_{1},\dots,C_{k}$ where $C_{i}$'s are disjoint nearest neighbor connected components of $\calC_{0,x}$. We will extract a path $\gamma$ from $0$ to $x$ in such a way that all points of $\gamma$ are in $\cup_{i=1}^{k}C_{i}$ and that $\abs{\gamma^{(i)}}=K\abs{\partial C_{i}}$ for some $K>0$ where $\gamma^{(i)}$ is the part of $\gamma$ in $C_{i}$. In this way, using~\eqref{ineq:like_finite_energy} and stantard perturbative estimates, we will extract for $\gamma^{(i)}$ a cost of order 
\begin{equation*}
e^{-c\beta\abs{\partial C_{i}}}\beta^{\abs{\gamma^{(i)}}}\prod_{e\in\gamma^{(i)}}J_{e},
\end{equation*}
which can compared easily to a Green function of a killed random walk with parameter $\lambda(\beta)$ satisfying $\lim_{\beta\rightarrow\infty}\lambda(\beta)=0$.
\begin{proof}[Proof of Lemma~\ref{lemma:saturation_comparaison_KRW}]
We are going to prove Lemma~\ref{lemma:saturation_comparaison_KRW} only for $d=2$ where the use of planar duality simplifies the notations. One can generalize the argument that follows for any $d\geq 2$ in a standard way by introducing $d-1$ dimensional plaquettes (i.e., the $d-1$ dimensional faces of a $d$ dimensional hypercubes). We define the \textit{dual graph} of $\bbZ^2$ by 
\begin{equation*}
(\bbZ^{2})^{\ast}=\bbZ^2 + (1/2,1/2).
\end{equation*}
The edges of $(\bbZ^{2})^{\ast}$ are called \textit{dual edges}, and any dual edge $e^{\ast}$ is perpendicular to an unique edge $e$ of $\bbZ^2$. Therefore, there is a one-to one correspondance between percolation configurations on $\bbZ^2$ and those on $(\bbZ^{2})^{\ast}$,  where a dual edge $e^{\ast}$ is open if and only if $e$ is closed.  

We are going to use the random-current representation of the truncated two-point function~\eqref{eq:truncated_to_current} (see section~\ref{subsection:random_current}). We will only work with a single current since one has 
\begin{equation*}
\p^{\emptyset,\lbrace 0,x\rbrace}_{\Lambda^{\g}_N,\beta}[0\nleftrightarrow \mathfrak{g}] \leq \p^{\lbrace 0,x\rbrace}_{\Lambda^{\g}_N,\beta}[0\nleftrightarrow \mathfrak{g}].
\end{equation*} 
To any percolation configuration $\omega$ induced by a current $\n$ on $\Lambda_{N}^{\mathfrak{g}}$ with sources $\lbrace 0,x\rbrace$, we can associate a new percolation configuration $(\hat{\omega}_e)_{e\in E_{\Lambda_{N+1}}}$ as follows:
\begin{center}
$\hat{\omega}_e =\left\{
    \begin{array}{ll}
       \omega_e & \mbox{if}~ e\in E_{\Lambda_N}\ \\
        \omega_{x\mathfrak{g}} & \mbox{if}~e=\lbrace x,y\rbrace,~x\in \partial \Lambda_N,~y\in \partial \Lambda_{N+1},~\vert x-y\vert_1 =1\ \\
        0 &\mbox{otherwise. }
    \end{array}
\right.$
\end{center}
We therefore have a surjective mapping $F:\omega\mapsto \hat{\omega}$ from the set of currents on $\Lambda_{N}^{\mathfrak{g}}$ having sources $\lbrace 0,x\rbrace$ to the set of percolation configurations on $E_{\Lambda_{N+1}}$.  
The law of $\hat{\omega}$ previously defined is therefore the push-forward measure of $ \mathbb{P}_{\Lambda_{N}^{\g}, \beta}^{\lbrace 0,x\rbrace}$ by $F$. 
Said differently, $\hat{\omega}\sim \mathbf{P}_{\Lambda_{N+1}}$, where $\mathbf{P}_{\Lambda_{N+1}}$ is the probability measure defined by $$\mathbf{P}_{\Lambda_{N+1}}(A) = \mathbb{P}_{\Lambda_{N}^{\g}, \beta}^{\lbrace 0,x\rbrace}(F^{-1}(A))$$ for any $A\in {\lbrace 0,1\rbrace}^{E_{\Lambda_{N+1}}}$.   Remark that $\mathbf{P}_{\Lambda_{N+1}}$ inherits the finite energy lower bound~(\ref{ineq:like_finite_energy}) from $\bbP_{\Lambda_{N}}^{\lbrace 0,x\rbrace}$. This allows us to reinterpret $0\nleftrightarrow \mathfrak{g}$ as the event that $0$ is disconnected from $\partial \Lambda_{N+1}$. Indeed,  observe that in order to have a connection from $0$ to $\partial \Lambda_{N+1}$ in $\hat{\omega}$, there must be a connection from $0$ to $\mathfrak{g}$ in $\omega$. This implies in particular that 
\begin{equation*}
\p^{\lbrace 0,x\rbrace}_{\Lambda^{\g}_N}[0\nleftrightarrow \mathfrak{g}] \leq \mathbf{P}_{\Lambda_{N+1}}[0\nleftrightarrow \partial \Lambda_{N+1},  0\leftrightarrow x].
\end{equation*} 
\begin{figure}[t]\label{fig:saturation_proof_alpha_star}	
\includegraphics[scale=0.75]{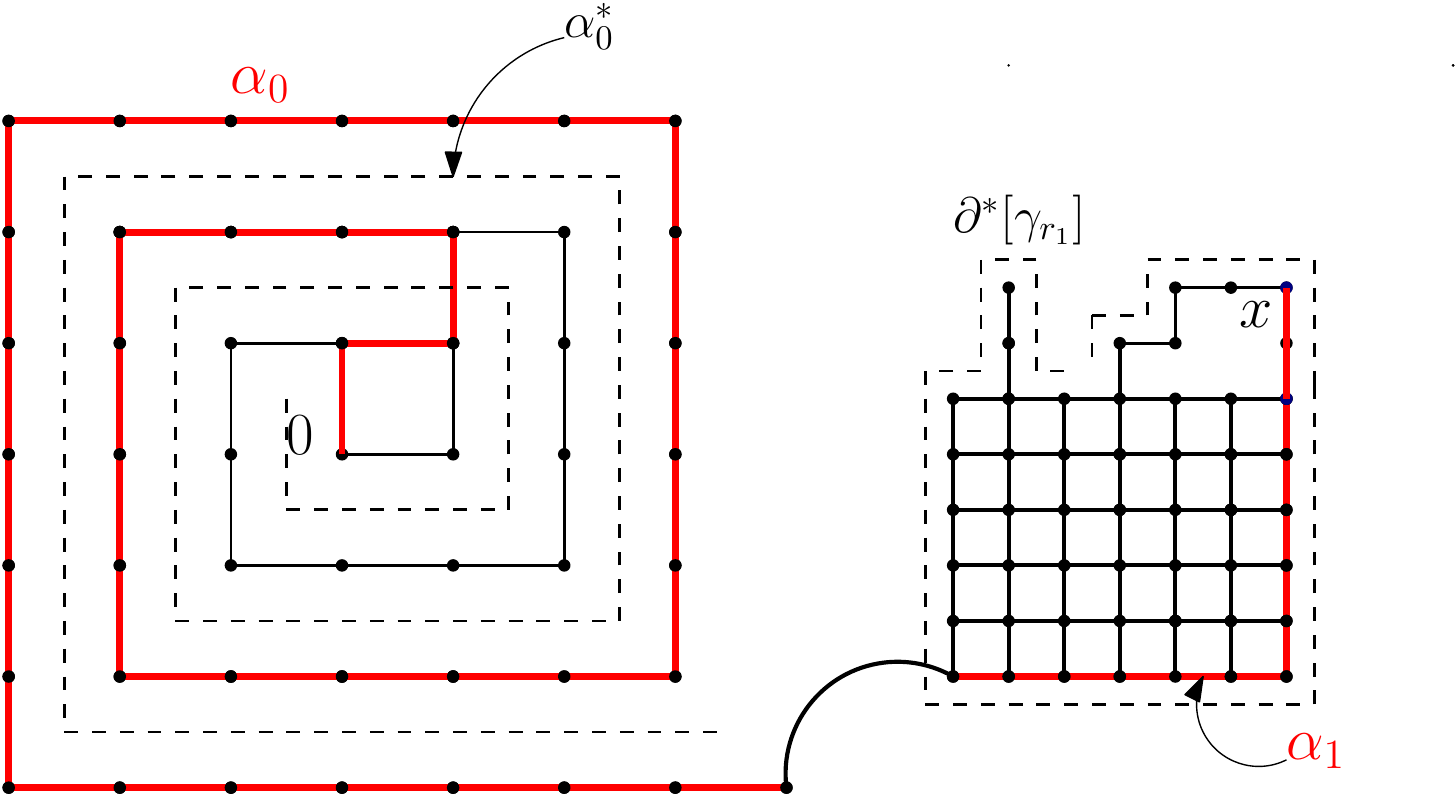}
\caption{A realization of $\mathcal{C}_{0,x}$. The open dual edges are dashed and the $\alpha_i$'s are in red.}
\end{figure}
Such an event can easily be described using dual blocking surfaces in a Peierls-like argument. 
We will call a path $\textit{basic}$ if it only uses edges of length $1$. Consider $\calC_{0,x}$ the joint cluster of $0$ and $x$. 
For any $y\in \calC_{0,x}$,  denote by $[y]$ the (random) set of points $z \in \calC_{0,x}$ such that there exists an open basic path joining $y$ to $z$.  Choose an arbitrary order on $\mathbb{Z}^d$.
Choose $\gamma=(\gamma_0,...,\gamma_n)$ joining $0$ to $x$ to be an open self-avoiding path minimal according to this order. We extract a new path from $\gamma$ using the following procedure. 
Let $r_0=0$ and 
\begin{equation*}
r_1 := \max \lbrace i ~:~\gamma_i \in [0],~0\leq i \leq n \rbrace.
\end{equation*}
For $k\geq 1$, define recursively
\begin{equation*}
r_{k+1} := \max \lbrace i ~:~ \gamma_i \in [\gamma_{r_k + 1}],~r_k<i\leq n\rbrace.
\end{equation*}
This procedure stops as soon as $r_k=n$.  Let $m=m(\gamma)$ be such that $r_m=n$. 
By construction,  for any $1\leq k< m$ we have the inclusion $\lbrace \gamma_{r_k +1},...,\gamma_{r_{k+1}}\rbrace \subset [\gamma_{r_k}]$, and the sets $\left([\gamma_{r_k}]\right)_{k\geq1}$ are all disjoint sets.  For any $k\geq 1$, there is a minimal self-avoiding basic path of open edges joining $\gamma_{r_k + 1}$ to $\gamma_{r_{k+1}}$, using only points in $[\gamma_{r_k}]$,  that is minimal with respect to the order we previously chose.  Denote by $\alpha_k$ such a path,  and by $\lambda_k$ its length.  Denote by $\alpha_0$ (respectively $\lambda_0$) the self-avoiding  basic path joining $0$ to $\gamma_{r_1}$ (respectively its length).  

We now have a new self-avoiding path joining $0$ to $x$ defined by taking the union of the paths $(\alpha_i)_{i\geq 0}$.  From now on, we will denote by $\gamma$ this new path in order to lighten the notations. To any cluster realization of the cluster $\calC_{0,x}$ one can thus associate an open path $\Gamma(\calC_{0,x})$ joining $0$ to $x$ using this procedure.
Moreover, each $\alpha_{k}$ is contained in the interior of a dual basic path of open edges. Denote by $\partial^{\ast} [\gamma_{r_k}]$ the shortest such path and by $\text{Int}(\partial^{\ast} [\gamma_{r_k}])$ its interior. We call $\partial^{\ast} [\gamma_{r_k}]$ \textit{the dual boundary of $[\gamma_{r_k}]$}.  Note that the $[\gamma_{r_k}]$'s are disjoint and each edge belonging to one of their dual boundaries can belong at most to two different boundaries.  
Since all the $[\gamma_{r_i}]$'s are connected subgraphs of a lattice and the $\alpha_i$'s are of minimal length,  there exists a family $\alpha_{1}^{\ast},...,\alpha_{m}^{\ast}$ of dual basic paths with $\vert  \alpha_i^{\ast}\vert \geq \vert \alpha_i\vert $ and $\abs{\alpha^{\ast}_i}\neq 0$ for all $0\leq i \leq n$,  such that, for every $i\in\lbrace 1,\dots,n\rbrace$, one has 
\begin{equation*}
\alpha_{i}^{\ast}\subset \left(\partial^{\ast} [\gamma_{r_i}]\cup \text{Int}(\partial^{\ast} [\gamma_{r_i}])\right)\setminus \bigcup_{j\neq i}\text{Int}(\partial^{\ast} [\gamma_{r_j}])
\end{equation*}
and such that there exists a deterministic constant $K>0$ satisfying
\begin{equation}\label{densite_positive}
\frac{\abs{\lbrace e^{\ast}\in\alpha_{i}^{\ast}: \omega_{e^{\ast}}=1\rbrace}}{\vert  \alpha_{i}^{\ast}\vert}\geq K.
\end{equation}
Notice that it is possible that $\alpha_{i}^{*}=\partial^{\ast} [\gamma_{r_i}]$. We are going to prove that there exists $C,c>0$ such that
\begin{equation*}
\mathbf{P}_{\Lambda_{N+1}} \left[0\nleftrightarrow \partial \Lambda_{N+1},  0\leftrightarrow x,  \Gamma(\calC_{0,x})= \gamma\right] \leq  \prod_{i=1}^{\vert\gamma\vert}C\sfe^{-c \beta}\beta J_{\gamma_{i-1}\gamma_{i}}.
\end{equation*}
In order to prove this inequality, we are going to use the fact that all edges in $\gamma$ are open (which will give the contribution in $\beta J_{\gamma_{i-1},\gamma_{i}})$, that all (dual) edges in $\partial^{\ast} [\gamma_{r_i}]$ are open and that there exists a strictly positive proportion of (dual) edges in $\alpha_{i}^{*}$ that are open. Fix now some $\alpha_{k}$. We are going to separate between two cases.
Firstly, assume $\abs{\partial^{\ast}[\gamma_{r_k}]} \geq \abs{\alpha_k}$. In this case, using~\eqref{ineq:like_finite_energy}, one has
\begin{equation}\label{ineq:estimate_dual_gamma}
\mathbf{P}_{\Lambda_{N+1}}(
\omega_{f^{\ast}} = 1\hphantom{,}\forall f^{\ast}\in\partial^{\ast}[\gamma_{r_k}])\leq C 2^{\abs{\partial^{\ast}[\gamma_{r_k}]}} \sfe^{-\frac{\beta}{2}  \abs{\partial^{\ast}[\gamma_{r_k}]}}\leq C\sfe^{-c\beta\abs{\partial^{\ast}[\gamma_{r_k}]}},
\end{equation}
In the first inequality,  the $\frac{1}{2}$ factor ensures that edges belonging to two different boundaries are not counted twice in the upcoming bounds. Therefore, the existence of an open dual basic path $\partial^{\ast}[\gamma_{r_k}]$ of length at least $\abs{\alpha_{k}^{\ast}}$ surrounding $\alpha_k$ is an event of probability $C\sfe^{-c\beta\abs{\alpha_{k}}}$. 

Secondly, assume that $\abs{\partial^{\ast}[\gamma_{r_k}]} < \abs{\alpha_k}$. In this case, using~\eqref{densite_positive} and\eqref{ineq:like_finite_energy}, the existence of $\alpha_k^{\ast}$ is an event with probability bounded by
\begin{equation*}
\vert \partial^{\ast} [\gamma_{r_k}]\cup \text{Int}(\partial^{\ast} [\gamma_{r_k}])\vert C\sfe^{-c\beta K \vert \alpha_{k}^{\ast} \vert}\leq C\sfe^{-c'\beta K\abs{\alpha_{k}^{\ast}}},
\end{equation*}
where we used that the number of ways of choosing open edges in $\alpha_{k}^{\ast}$ is given by $\sum_{r\geq K\vert \alpha_i^{\ast}\vert}\binom{\vert \alpha_i^{\ast}\vert}{r}$.

Putting all of this together, denoting by $A_N$ the event $\lbrace 0\leftrightarrow x\rbrace \cap \lbrace 0 \nleftrightarrow \Lambda_{N+1}\rbrace$, the union bound gives
\begin{align}
\mathbf{P}_{\Lambda_{N+1}} \left(A_N,~
  \Gamma(\calC_{0,x})= \gamma\right) &\leq \sum_{\alpha^{\ast}_{1},... ,\alpha^{\ast}_{m(\gamma)}} \prod_{k=1}^{m(\gamma)}C\sfe^{-c\beta K \left\vert  \alpha^{\ast}_{k}\right\vert}\prod_{i=1}^{\vert\gamma\vert}\beta J_{\gamma_{i-1}\gamma_{i}}\\
&\leq \left(\prod_{k=1}^{m(\gamma)} \sum_{\alpha^{\ast}_{k}} C\sfe^{-c\beta K \left\vert  \alpha^{\ast}_{k}\right\vert} \right) \prod_{i=1}^{\vert\gamma\vert}\beta J_{\gamma_{i-1}\gamma_{i}}\\
&= \prod_{k=1}^{m(\gamma)}C\sfe^{-c'\beta \vert \lambda_k \vert}  \prod_{i=1}^{\vert\gamma\vert}\beta J_{\gamma_{i-1}\gamma_{i}},
\end{align}
where, in the last line, we used that the number of paths $\alpha_{k}^{\ast}$ of length $l$ is bounded by $(2d)^{l}$ and took $\beta$ large enough.  Since $\sum_{k}\lambda_{k}\geq \frac{\abs{\gamma}}{2}$,  there exist two positive constants $C$ and $c$ such that 
\begin{equation}
\mathbf{P}_{\Lambda_{N+1}} \left[A_n,  \Gamma(\calC_{0,x})= \gamma\right] \leq  \prod_{i=1}^{\vert\gamma\vert}C\sfe^{-c \beta}\beta J_{\gamma_{i-1}\gamma_{i}}.
\end{equation}
Therefore, for any $\beta$ big enough, one has 
$$\mathbf{P}_{\Lambda_{N+1}} \left[A_n \right] \leq \sum_{n\geq0}\sum_{\gamma \in \text{SAW}_n(0,x)}\prod_{i=1}^{\vert\gamma\vert}C\sfe^{-c \beta}\beta J_{\gamma_{i-1}\gamma_{i}}\leq G_{\lambda(\beta)}^{\text{KRW}}(0,x),$$
where $\lim_{\beta\rightarrow\infty}\lambda(\beta)=0$. Since we have
\begin{equation*}
\trn_{\Lambda_{N};\beta} = \langle \sigma_0 \sigma_x \rangle_{\Lambda_{N};\beta}\p^{\emptyset,\lbrace 0,x\rbrace}_{\Lambda^{\g}_{N}, \beta}\left[0\nleftrightarrow\mathfrak{g} \right]\leq \langle \sigma_0 \sigma_x \rangle_{\Lambda_{N};\beta}\mathbf{P}_{\Lambda_{N+1}} \left[A_n \right],
\end{equation*} 
there exists a constant $c_{\beta}>0$ such that $\trn_{\Lambda_{N}; \beta} \leq c_{\beta} G_{\lambda(\beta)}^{\text{KRW}} (0,x)$ for $\beta$ big enough. Taking the limit as $N\rightarrow\infty$, one finally gets
$$\trn_{\beta} \leq c_{\beta} G_{\lambda(\beta)}^{\text{KRW}} (0,x),$$
which is the desired result.
\end{proof}
\begin{remark}
In the case of the Ising model with striclty positive magnetic field $h$, one could prove that there exists a non-trivial saturated regime in a straightforward way. Indeed, one can derive a random-current representation of the truncated two-point function in such a way that $J_{x,\mathfrak{g}}=h>0$ for any $x\in\bbZ^{d}$ and that 
\begin{equation*}
\langle\sigma_{0};\sigma_{x}\rangle_{\Lambda;\beta,h}=\langle\sigma_{0},\sigma_{x}\rangle_{\Lambda;\beta,h}\p^{\emptyset, \lbrace 0,x\rbrace}_{\Lambda^{\g};\beta,h}\left[0\nleftrightarrow\mathfrak{g} \right].
\end{equation*}
In particular, in this case, for any connection $\gamma$ from $0$ to $x$ in the right-hand side, $\gamma$ has to be disconnected from $\mathfrak{g}$ which is an event of probability of order $\sfe^{-ch\beta}$. Therefore, Lemma~\ref{lemma:saturation_comparaison_KRW} holds in this case as well, from which the desired conclusion follows.
\end{remark}
\section*{Acknowledgments}
YA is supported by the Swiss NSF grant 200021\_200422 is a member of the NCCR SwissMAP. KK thanks the Excellence Fellowship program at the University of Geneva for supporting him during his studies. Both authors very kindly thank Yvan Velenik and Sébastien Ott for useful discussions. We also thank Yvan Velenik for reading the first version of the present article and several helpful comments.

\bibliography{BIGbib}
\bibliographystyle{plain}

\end{document}